\documentclass[12pt,twoside,final,psamsfonts]{amsart}

\usepackage[psamsfonts]{amssymb}
\usepackage{times,a4wide}
\usepackage{MnSymbol}

\usepackage{color}		
\usepackage{epsfig}

\usepackage{graphicx}

\addtolength{\textheight}{1.5truecm}

\addtolength\headheight{4pt}
\setlength\parskip{5pt}
\newcommand{\nc}{\newcommand}

\numberwithin{equation}{section}
\theoremstyle{plain}
\newtheorem{proposition}[equation]{Proposition}

\newtheorem{theorem}[equation]{Theorem}
\newtheorem{lemma}[equation]{Lemma}
\theoremstyle{definition}

\newtheorem{example}[equation]{Example}
\newtheorem{remark}[equation]{Remark}

\newcommand{\N}{{\mathbb N}}
\newcommand{\D}{{\mathbb D}}

\newcommand{\T}{{\mathbb T}} 

\newcommand{\Z}{{\mathbb Z}}

\newcommand{\lra}{\longrightarrow}

\newcommand{\eps}{\varepsilon}
\newcommand{\vp}{\varphi}
\nc{\bea}{\begin{eqnarray}}
\nc{\eea}{\end{eqnarray}}
\nc{\beqa}{\begin{eqnarray*}}
\nc{\eeqa}{\end{eqnarray*}}
\nc{\Hi}{H^{\infty}}
\nc{\loi}{\ell^{\infty}}
\nc{\NL}{N^+\vert \Lambda}
\nc{\hf}{{\mathcal H}_{\phi}}
\nc{\liL}{\lambda\in\Lambda}
\nc{\nn}{\nonumber}
\nc{\dst}{\displaystyle}

\newenvironment{proof*}{\vskip 2mm\noindent {}}{$\blacksquare$ \vskip 2mm}
\numberwithin{equation}{section}

\renewcommand{\Im}{\mbox{Im}}

\title{Bad boundary behavior in star invariant subspaces I}

\author{Andreas Hartmann \& William T.\ Ross}

\address{Institut de Math\'ematiques de Bordeaux,
Universit\'e Bordeaux I, 351 cours de la Lib\'eration,
33405 Talence, France}

\address{Department of Mathematics and Computer Science, University of Richmond, VA 23173, USA}


\email{hartmann@math.u-bordeaux.fr, wross@richmond.edu}

\date{\today}

\keywords{Hardy spaces, star invariant subspaces, non-tangential limits, Blaschke products}

\subjclass{30J10, 30A12, 30A08}

\begin{document}

\bibliographystyle{amsalpha}

\begin{abstract} We discuss the boundary behavior of functions in
star invariant subspaces $(B H^2)^{\perp}$, where $B$ is a Blaschke product. 
Extending some results of
Ahern and Clark, we are particularly interested
in the  growth rates of functions at points of the spectrum of $B$ where $B$ 
does not admit a 
derivative in the sense of Carath\'eodory.  
\end{abstract}

\maketitle


\section{Introduction}

For a Blaschke product $B$
with zeros $(\lambda_{n})_{n \geq 1} \subset \D = \{|z| < 1\}$, repeated according to multiplicity, let us recall the following theorem of Ahern and Clark \cite{AC70} about the ``good'' non-tangential boundary behavior of functions in the model spaces $(B H^2)^{\perp} := H^2 \ominus B H^2$ \cite{Niktr} of the Hardy space $H^2$ of $\mathbb{D}$ \cite{Duren, Garnett}. 

\begin{theorem}[\cite{AC70}]  \label{AC-paper}
For a Blaschke product $B$ with zeros $(\lambda_{n})_{n \geq 1}$ and $\zeta \in \T := \partial \mathbb{D}$, the following are equivalent: 
\begin{enumerate}
\item Every $f \in (B H^2)^{\perp}$ has a non-tangential limit at $\zeta$, i.e., 
$$f(\zeta) := \measuredangle \lim_{\lambda \to \zeta} f(\lambda) \; \; \mbox{exists.}$$
\item $B$ has an angular derivative in the sense of Carath\'eodory 
at $\zeta$, i.e., 
$$\measuredangle \lim_{z \to \zeta} B(z) = \eta \in \T \quad \mbox{and ${\displaystyle \measuredangle \lim_{z \to \zeta} B'(z)}$ exists.}$$
\item  The following condition holds
\begin{equation} \label{AC1}
\sum_{n \geq 1} \frac{1 - |\lambda_{n}|}{|\zeta - \lambda_{n}|^2} < \infty.
\end{equation}
\item The family of reproducing kernels for $(B H^2)^{\perp}$
$$k^B_{\lambda}(z) := \frac{1 - \overline{B(\lambda)} B(z)}{1 - \overline{\lambda} z}$$ is uniformly norm bounded in each fixed Stolz domain 
$$\Gamma_{\alpha,\zeta} :=\left\{z \in \D: 
\frac{|z - \zeta|}{1 - |z|} < \alpha \right\}, \quad \alpha \in (1, \infty).$$
\end{enumerate}
\end{theorem}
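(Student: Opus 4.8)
The plan is to route everything through the reproducing kernel and its norm,
\[
\|k^B_\lambda\|^2 = k^B_\lambda(\lambda) = \frac{1-|B(\lambda)|^2}{1-|\lambda|^2},
\]
proving $(2)\Leftrightarrow(3)$ via the Julia--Carath\'eodory theorem, $(3)\Leftrightarrow(4)$ by a two-sided comparison of this norm with the Ahern--Clark series, $(4)\Rightarrow(1)$ by weak convergence of the kernels, and $(1)\Rightarrow(3)$ by the uniform boundedness principle.

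For $(2)\Leftrightarrow(3)$ I would write $B = \prod_n b_{\lambda_n}$ (normalized Blaschke factors, so $1-|b_\mu(z)|^2 = (1-|\mu|^2)(1-|z|^2)/|1-\bar\mu z|^2$) and use the telescoping identity $1-|B(z)|^2 = \sum_n (1-|b_{\lambda_n}(z)|^2)\prod_{k<n}|b_{\lambda_k}(z)|^2$. Estimating the tail products by $1$ from above, and comparing with the finite sub-products $b_{\lambda_1}\cdots b_{\lambda_N}$ (which extend analytically through $\zeta$) from below and then letting $N\to\infty$, one gets
\[
\liminf_{r\to1^-}\frac{1-|B(r\zeta)|}{1-r} = \sum_{n\ge1}\frac{1-|\lambda_n|^2}{|\zeta-\lambda_n|^2}\in(0,\infty],
\]
using $|1-\overline{\lambda_n}\zeta| = |\zeta-\lambda_n|$. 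By Julia--Carath\'eodory the left side is finite exactly when $B$ has an angular derivative in the sense of Carath\'eodory at $\zeta$; this is $(2)\Leftrightarrow(3)$, and in that case $B$ has a unimodular non-tangential limit $\eta$ at $\zeta$.

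For $(3)\Rightarrow(4)$, the telescoping estimate gives $\|k^B_z\|^2 \le \sum_n (1-|\lambda_n|^2)/|1-\overline{\lambda_n}z|^2$, and a standard Stolz-domain inequality supplies, for each $\alpha$, a constant $c_\alpha>0$ with $|1-\overline{\lambda_n}z|\ge c_\alpha|\zeta-\lambda_n|$ for all $z\in\Gamma_{\alpha,\zeta}$ and all $n$; hence $(3)$ makes $\|k^B_z\|$ bounded on $\Gamma_{\alpha,\zeta}$. For $(4)\Rightarrow(3)$, note $r\zeta\in\Gamma_{\alpha,\zeta}$ for every $\alpha>1$, so $(4)$ keeps $(1-|B(r\zeta)|^2)/(1-r^2)$ bounded and the displayed $\liminf$ is then finite. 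For $(4)\Rightarrow(1)$, fix $f\in(BH^2)^\perp$; since $f(\lambda)=\langle f,k^B_\lambda\rangle$ it suffices to show $k^B_\lambda$ converges weakly in $(BH^2)^\perp$ as $\lambda\to\zeta$ inside $\Gamma_{\alpha,\zeta}$. The family is norm bounded by $(4)$, and for each $\mu\in\D$, $\langle k^B_\mu,k^B_\lambda\rangle = k^B_\mu(\lambda) = (1-\overline{B(\mu)}B(\lambda))/(1-\overline{\mu}\lambda)$ converges as $\lambda\to\zeta$ non-tangentially, because $B$ has the non-tangential limit $\eta$ (by $(4)\Leftrightarrow(3)\Leftrightarrow(2)$). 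Since $\{k^B_\mu:\mu\in\D\}$ spans a dense subspace of $(BH^2)^\perp$, a routine $\varepsilon/3$ argument upgrades this to weak convergence of $k^B_\lambda$ to some $k^B_\zeta\in(BH^2)^\perp$ (concretely $k^B_\zeta(z) = (1-\overline{\eta}B(z))/(1-\overline{\zeta}z)$, as one reads off by testing against point evaluations), so $f(\lambda)\to\langle f,k^B_\zeta\rangle$, which is $(1)$.

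Finally, for $(1)\Rightarrow(3)$: if $(3)$ fails the displayed $\liminf$ equals $+\infty$, so $\|k^B_{r\zeta}\|^2 = (1-|B(r\zeta)|^2)/(1-r^2)\to\infty$ as $r\to1^-$; applying the uniform boundedness principle to the functionals $f\mapsto f(r\zeta) = \langle f,k^B_{r\zeta}\rangle$ on the Hilbert space $(BH^2)^\perp$ produces $f\in(BH^2)^\perp$ with $\sup_{0\le r<1}|f(r\zeta)| = \infty$, so $f$ has no radial, hence no non-tangential, limit at $\zeta$, contradicting $(1)$. I expect the main obstacle to be the two-sided comparison of $(1-|B(z)|^2)/(1-|z|^2)$ with $\sum_n(1-|\lambda_n|^2)/|\zeta-\lambda_n|^2$ — the delicate lower bound through finite sub-products (used in $(3)\Leftrightarrow(4)$ and $(1)\Rightarrow(3)$) and the Stolz estimate for the upper bound — while the Julia--Carath\'eodory theorem is invoked as an external black box for $(2)\Leftrightarrow(3)$.
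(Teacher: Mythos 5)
This statement is quoted by the paper from Ahern--Clark \cite{AC70} (with the equivalence $(2)\Leftrightarrow(3)$ attributed to Frostman \cite{Frost}) and is not proved in the paper, so there is no internal proof to compare against; judged on its own, your argument is correct and is the standard modern proof for the Blaschke-product case. Its ingredients --- the identity $\|k^B_\lambda\|^2=(1-|B(\lambda)|^2)/(1-|\lambda|^2)$, the telescoping comparison of $1-|B|^2$ with $\sum_n\bigl(1-|b_{\lambda_n}|^2\bigr)$, Julia--Carath\'eodory for $(2)\Leftrightarrow(3)$, weak convergence of the bounded kernel family for $(4)\Rightarrow(1)$, and Banach--Steinhaus for $(1)\Rightarrow(3)$ --- are exactly in the spirit of what the paper does use later (compare Lemma \ref{KeyLemma}, which compares $\|k^B_r\|^2$ with the same series under the Frostman condition), whereas the original Ahern--Clark argument is organized to cover general inner functions. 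Two details you should make explicit. First, the exact identity $\liminf_{r\to1^-}(1-|B(r\zeta)|)/(1-r)=\sum_n(1-|\lambda_n|^2)/|\zeta-\lambda_n|^2$ requires, for the inequality ``$\le$'' when the sum is finite, a domination allowing the passage to the limit in your upper telescoping bound; the elementary estimate $|1-\overline{\lambda_n}\,r\zeta|\ge\tfrac12|\zeta-\lambda_n|$, valid for all $r\in[0,1)$, does this, and in fact for the four equivalences you never need the exact equality, only two-sided comparability, which your Stolz estimate already gives (it is true with $c_\alpha=1/(1+\alpha)$, since $|1-\overline{\lambda}\zeta|\le|1-\overline{\lambda}z|+|z-\zeta|$ and $|z-\zeta|\le\alpha(1-|z|)\le\alpha|1-\overline{\lambda}z|$ on $\Gamma_{\alpha,\zeta}$). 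Second, Julia--Carath\'eodory is usually stated with the unrestricted $\liminf_{z\to\zeta}$ rather than the radial one; this is harmless because the radial liminf dominates the unrestricted one, and existence of the angular derivative forces the radial limit of $(1-|B(r\zeta)|)/(1-r)$ to exist, so finiteness of the two liminfs is equivalent. With these routine points filled in, your proof is complete.
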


We point out three things here. First, the equivalence of conditions (2) and (3) of this theorem is a classical result of Frostman \cite{Frost}. Second, this theorem can be extended to characterize the existence of non-tangential boundary 
limits of the derivatives (up to a given order) of functions in $(B H^2)^{\perp}$ as well as the boundary behavior of functions in $(I H^2)^{\perp}$ where $I$ is a general  inner function \cite{AC70}. Third, there is a version of this result for various types of \emph{tangential} boundary behavior of $(B H^2)^{\perp}$ functions \cite{Cargo, Protas}.
Of course there is the well-known result 
(see e.g.\ \cite[p.~78]{Niktr}) which says that every $f \in (B H^2)^{\perp}$ has an analytic continuation across the complement of the accumulation points of the zeros of $B$. 

In this paper we consider the growth of functions in $(B H^2)^{\perp}$ at the points $\zeta \in \T$ where \eqref{AC1} fails. Thus, as in the title of this paper, we are looking at the ``bad'' boundary behavior of functions from $(B H^2)^{\perp}$.  
First observe that every function $f \in H^2$ satisfies 
\begin{equation} \label{H2growth}
|f(\lambda)| = o\left(\frac{1}{\sqrt{1 - |\lambda|}}\right), \quad \lambda \in \Gamma_{\alpha, \zeta},
\end{equation}
and this growth is, in a sense,  maximal. 
As seen in the Ahern-Clark theorem, functions in $(B H^2)^{\perp}$ can be significantly better behaved depending on the distribution of the zeros of $B$. We are interested in examining Blaschke products for which the growth rates for functions in $(B H^2)^{\perp}$ are somewhere between the Ahern-Clark situation, where every function has a non-tangential limit, and the maximal allowable growth in \eqref{H2growth}.

To explain this a bit more, let $\zeta=1$ and observe that
\begin{equation} \label{trivial-est}
 |f(\lambda)|=|\langle f,k^B_{\lambda}\rangle|\le
 \|f\|\left(\frac{1-|B(\lambda)|^2}{1-|\lambda|^2}\right)^{1/2}, \quad f \in (B H^2)^{\perp}, \lambda \in \D.
\end{equation}
In the above, $\|\cdot\|$ denotes the usual norm in $H^2$. 
So, in order to give an upper estimate of the admissible
growth in a Stolz domain $\Gamma_{\alpha, 1}$, we have to control $\|k^B_{\lambda}\|$ 
 which ultimately involves getting a handle on  how
fast $|B(\lambda)|$ goes to 1 in $\Gamma_{\alpha, 1}$.

Of course the subtlety occurs  when
$$\measuredangle \lim_{z \to 1} B(z) = \eta \in \T$$ which is implied by the Frostman condition \cite{CL, Frost}
\begin{equation} \label{Frost}
 \sum_{n\ge 1}\frac{1-|\lambda_n|}{|1-\lambda_n|}<\infty.
\end{equation}
Observe the power in the denominator in \eqref{Frost} with respect to the Ahern-Clark
condition \eqref{AC1}.

The main results of this paper will be non-tangential growth estimates of functions in $(B H^2)^{\perp}$ via non-tangential growth estimates of the norms of the kernel functions. Our main results (Theorem \ref{C-upper-growth}, Theorem \ref{Oricycle-seq}, and Theorem \ref{thm3.1})  will be estimates of the form 
$$\|k^{B}_{r}\| \asymp h(r), \quad r \to 1^{-},$$
for some $h: [0, 1) \to \mathbb{R}_{+}$ which depends on the position of the zeros of the Blaschke product $B$ near $1$. This will, of course via \eqref{trivial-est}, yield the estimate 
$$|f(r)| \lesssim h(r), \quad f \in (B H^2)^{\perp}, \quad r \to 1^{-}.$$ To get a handle on the sharpness of this growth estimate, we will show (Theorem \ref{thm4.2}) that for every $\varepsilon > 0$, there exists an $f \in (B H^2)^{\perp}$ satisfying
\begin{equation} \label{A-opt}
|f(r)| \gtrsim \frac{h(r)}{\log^{1+\eps} h(r)}, \quad r \to 1^{-}.
\end{equation}

While this estimate might not be optimal, it allows to show
that a certain sequence of reproducing kernels cannot form 
an  unconditional sequence  (see 
Section \ref{section5}).

Though a general result will be discussed in Section 4, the two basic types of Blaschke sequences $(\lambda_n)_{n \geq 1}$ for which we can get concise estimates of $\|k^{B}_{r}\|$, are 
\begin{equation} \label{sample-1}
\lambda_{n} = (1 - x_n 2^{-2n}) e^{i 2^{-n}}, \quad x_{n} \downarrow 0,
\end{equation}
which approaches $1$ very tangentially, and 
\begin{equation} \label{sample-2}
\lambda_{n} = (1 - \theta_{n}^{2}) e^{i \theta_n}, \quad 0 < \theta_n < 1, \quad \sum_{n \geq 1} \theta_n < \infty,
\end{equation}
which approaches $1$ along an oricycle. For example,  when $x_n = 1/n$ in \eqref{sample-1}, we have the upper estimate (see Example \eqref{est-loglog}(1))
$$|f(r)| \lesssim\sqrt{ \log \log \frac{1}{1 - r}}, \quad r \to 1^{-},$$
for all $f \in (B H^2)^{\perp}$.
This estimate is optimal in the sense of \eqref{A-opt}.

Picking $\theta_{n} = 1/n^{\alpha}, \alpha > 1$, in \eqref{sample-2}, we have the estimate (see Example \eqref{Example4.32}(1))
$$|f(r)| \lesssim \frac{1}{(1 - r)^{\frac{1}{2 \alpha}}}, \quad r \to 1^{-}.$$
Compare these two results to the growth rate in \eqref{H2growth} of a generic $H^2$ function. 


This is the first of two papers on ``bad'' boundary behavior of $(I H^2)^{\perp}$ ($I$ inner) functions near a fixed point on the circle. In this paper we consider the case when $I$ 
is a Blaschke product giving exact estimates on the norm of the reproducing
kernel. The next paper will consider the case when $I$ is a general inner function
providing only upper estimates.


\section{What can be expected}\label{section2}

 

 We have already mentioned
that every $f \in H^2$ satisfies
\begin{equation}\label{little-oh-2}
  |f(\lambda)|=o\left(\frac{1}{\sqrt{1-|\lambda|}}\right),
 \quad \lambda\in\Gamma_{\alpha, \zeta}.
\end{equation}
%

The little-oh condition in \eqref{little-oh-2}
is, in a sense,  sharp since one can construct suitable outer functions whose non-tangential growth gets arbitrarily   close to \eqref{little-oh-2}.

Contrast this with the following result which shows that functions in certain $(B H^2)^{\perp}$ spaces can {\em not} reach the maximal growth in \eqref{little-oh-2}. Recall that a sequence
$\Lambda= (\lambda_n)_{n \geq 1}\subset \D$ is \emph{interpolating} if 
 $H^2|\Lambda =\{(a_n)_{n \geq 1}:\sum_n (1-|\lambda_n|^2)
|a_n|^2<\infty\}$.

\begin{proposition}[\cite{SS}]\label{prop2.9}
Let $B$ be a Blaschke product whose zeros $\lambda_n$ form an interpolating
sequence
and tend non-tangentially to $1$.
Then
\[
  |f(\lambda_n)|=\eps_n\frac{1}{\sqrt{1-|\lambda_n|}},\quad \forall n\in\N,
\]
for $f\in (B H^2)^{\perp}$
if and only if $(\eps_n)_{n \geq 1} \in \ell^2$.
\end{proposition}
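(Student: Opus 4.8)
The plan is to reduce the statement to a single two-sided norm estimate for functions in the model space, namely
$$\|f\|^{2}\ \asymp\ \sum_{n\ge1}(1-|\lambda_{n}|^{2})\,|f(\lambda_{n})|^{2},\qquad f\in(BH^{2})^{\perp}.\qquad(\star)$$
Recall that, because $B(\lambda_{n})=0$, the reproducing kernel of $(BH^{2})^{\perp}$ at $\lambda_{n}$ is just the Szeg\H{o} kernel $k^{B}_{\lambda_{n}}(z)=(1-\overline{\lambda_{n}}z)^{-1}$, and $(\star)$ is precisely the assertion that the normalized kernels $(1-|\lambda_{n}|^{2})^{1/2}k^{B}_{\lambda_{n}}$ form a Riesz basis of $(BH^{2})^{\perp}$ --- the model-space form of the Shapiro--Shields (and Carleson) interpolation theorem.

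First I would prove $(\star)$. The upper bound is the Carleson embedding theorem: for an interpolating sequence $\sum_{n}(1-|\lambda_{n}|^{2})\delta_{\lambda_{n}}$ is a Carleson measure, so $\sum_{n}(1-|\lambda_{n}|^{2})|f(\lambda_{n})|^{2}\lesssim\|f\|^{2}$ already for every $f\in H^{2}$. For the lower bound, observe that the evaluation map $Rf=(f(\lambda_{n}))_{n\ge1}$ satisfies $BH^{2}\subset\ker R$ (every $g\in BH^{2}$ vanishes on $\Lambda$), hence $R$ factors through $H^{2}\ominus BH^{2}=(BH^{2})^{\perp}$; by the very definition of an interpolating sequence, $R(H^{2})=H^{2}|\Lambda$, so $R$ maps $(BH^{2})^{\perp}$ \emph{onto} $H^{2}|\Lambda$. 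It is also injective there: interpolating sequences are uniformly separated, so the $\lambda_{n}$ are distinct, and if $f\in(BH^{2})^{\perp}$ vanishes on all of $\Lambda$ then $B$ divides $f$, forcing $f\in BH^{2}\cap(BH^{2})^{\perp}=\{0\}$. Thus $R$ restricts to a bounded bijection of Banach spaces $(BH^{2})^{\perp}\to H^{2}|\Lambda$, and the open mapping theorem supplies the lower bound in $(\star)$.

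I would then deduce the proposition from $(\star)$ by a one-line computation. Since $\lambda_{n}\to1$ we have $1\le1+|\lambda_{n}|\le2$, whence $1-|\lambda_{n}|^{2}=(1+|\lambda_{n}|)(1-|\lambda_{n}|)\asymp1-|\lambda_{n}|$. If $f\in(BH^{2})^{\perp}$ satisfies $|f(\lambda_{n})|=\eps_{n}(1-|\lambda_{n}|)^{-1/2}$ for all $n$, then
$$\sum_{n\ge1}(1-|\lambda_{n}|^{2})\,|f(\lambda_{n})|^{2}=\sum_{n\ge1}(1+|\lambda_{n}|)\,\eps_{n}^{2}\ \asymp\ \sum_{n\ge1}\eps_{n}^{2},$$
and the left side is finite by $(\star)$, so $(\eps_{n})\in\ell^{2}$. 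Conversely, given $(\eps_{n})\in\ell^{2}$, put $a_{n}:=\eps_{n}(1-|\lambda_{n}|)^{-1/2}$ (or any values of these moduli); the same computation shows $(a_{n})\in H^{2}|\Lambda$, so the surjectivity just established produces $f\in(BH^{2})^{\perp}$ with $f(\lambda_{n})=a_{n}$, i.e.\ $|f(\lambda_{n})|=\eps_{n}(1-|\lambda_{n}|)^{-1/2}$ for every $n$.

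The only substantive step is $(\star)$, i.e.\ transferring the interpolation property from $H^{2}$ to $(BH^{2})^{\perp}$ together with the two-sided norm control; everything after that is the bookkeeping identity $1-|\lambda_{n}|^{2}\asymp1-|\lambda_{n}|$. The hypothesis that $\Lambda$ tends non-tangentially to $1$ plays no role in the equivalence itself; it is the contextual assumption placing us in the ``bad'' regime, where $|1-\lambda_{n}|\asymp1-|\lambda_{n}|$ makes $\sum_{n}(1-|\lambda_{n}|)/|1-\lambda_{n}|^{2}\asymp\sum_{n}1/(1-|\lambda_{n}|)=\infty$, so that the Ahern--Clark condition \eqref{AC1} fails and growth of order $(1-|\lambda_{n}|)^{-1/2}$ along $\Lambda$ is genuinely possible.
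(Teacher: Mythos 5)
Your proposal is correct and follows essentially the same route as the paper: the key observation, that $BH^{2}$ vanishes on $\Lambda$ so that $(BH^{2})^{\perp}|\Lambda=H^{2}|\Lambda$, is exactly how the paper reduces the proposition to the Shapiro--Shields description of $H^{2}|\Lambda$ for an interpolating sequence. Your estimate $(\star)$ (Carleson embedding plus the open mapping theorem) is just an unpacked proof of that Shapiro--Shields/Riesz-basis fact, which the paper simply cites, and your remark that the non-tangential hypothesis is not needed for the equivalence agrees with the paper's own comment.
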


Strictly speaking this result is stated in $H^2$ (and for arbitrary interpolating
sequences), but since 
functions in $BH^2$ vanish on $\Lambda$,
we obviously have $(BH^2)^{\perp}|\Lambda=H^2|\Lambda$.

A central result in our discussion is the following lemma.

\begin{lemma}\label{KeyLemma}
If $B$ is a Blaschke product with zeros $\lambda_n = r_n e^{i \theta_n}$ and $\measuredangle \lim_{z \to 1} B(z) = \eta \in \T$,  then
$$\|k^B_{r}\|^2 \asymp \sum_{n \geq 1} \frac{1 - r_{n}^{2}}{|1- \overline{\lambda}_nr |^2}, \quad r \in (0, 1).$$
\end{lemma}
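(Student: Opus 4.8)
The plan is to compute $\|k^B_r\|^2$ explicitly and then show the resulting expression is comparable to the claimed sum. Since $k^B_r$ is the reproducing kernel for $(BH^2)^\perp$ at the point $r$, we have the standard identity
\[
\|k^B_r\|^2 = k^B_r(r) = \frac{1 - |B(r)|^2}{1 - r^2}.
\]
So the lemma reduces to showing
\[
\frac{1 - |B(r)|^2}{1 - r^2} \asymp \sum_{n \geq 1} \frac{1 - r_n^2}{|1 - \overline{\lambda}_n r|^2}, \quad r \in (0,1).
\]

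The first step is to relate $1 - |B(r)|^2$ to the sum $\sum_n (1 - |b_n(r)|^2)$ where $b_n$ is the $n$-th Blaschke factor, using the product structure $|B(r)|^2 = \prod_n |b_n(r)|^2$. The elementary fact here is that for a product $\prod_n t_n$ with $t_n \in (0,1]$ and $\sum_n (1 - t_n) < \infty$, one has $1 - \prod_n t_n \asymp \sum_n (1 - t_n)$, with the comparison constants depending only on an upper bound for $\sum_n (1 - t_n)$ (the lower bound $1 - \prod t_n \le \sum (1-t_n)$ is trivial; the upper bound uses $\prod t_n \ge \prod e^{-c(1-t_n)}$ type estimates valid when the $1-t_n$ are bounded). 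Taking $t_n = |b_n(r)|^2$, and noting that $1 - |b_n(r)|^2 = \frac{(1-r_n^2)(1-r^2)}{|1 - \overline{\lambda}_n r|^2}$, we get
\[
1 - |B(r)|^2 \asymp \sum_{n \geq 1} \frac{(1 - r_n^2)(1 - r^2)}{|1 - \overline{\lambda}_n r|^2},
\]
and dividing by $1 - r^2$ gives exactly the claim. The hypothesis $\measuredangle\lim_{z\to 1} B(z) = \eta \in \T$ is what guarantees, via the Frostman condition \eqref{Frost}, that $\sum_n \frac{1 - r_n}{|1 - \lambda_n|} < \infty$, which in turn bounds $\sum_n (1 - |b_n(r)|^2)$ uniformly for $r$ near $1$ — this uniformity is precisely what is needed to make the $\asymp$ constants independent of $r$.

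The main obstacle is this uniformity in $r$: one must check that $\sup_{r \in (0,1)} \sum_n (1 - |b_n(r)|^2)$ is finite, or at least bounded on a neighborhood of $1$ (and the finitely many remaining $r$ away from $1$ are handled trivially since there $1 - |B(r)|^2$ is bounded above and below by positive constants). For this I would estimate $|1 - \overline{\lambda}_n r| \gtrsim |1 - \lambda_n|$ for $r$ sufficiently close to $1$ (using $|1 - \overline\lambda_n r| \ge \frac12|1-\lambda_n|$ once $r \ge$ some $r_0$, by a geometric argument comparing $|1 - \overline\lambda_n r|$ and $|1 - \overline\lambda_n|$), so that
\[
\sum_n \frac{(1-r_n^2)(1-r^2)}{|1 - \overline\lambda_n r|^2} \lesssim \sum_n \frac{1 - r_n}{|1 - \lambda_n|^2} \cdot (1 - r^2),
\]
but that is not quite enough since the Frostman sum only has one power of $|1-\lambda_n|$; more carefully one writes $(1-r^2)/|1-\overline\lambda_n r| \le C$ (this ratio is bounded because $|1 - \overline\lambda_n r| \ge 1 - r|\lambda_n| \ge 1 - r \ge \frac12(1-r^2)$... wait, more precisely $|1 - \overline\lambda_n r| \geq 1 - r_n r \geq 1 - r$), giving
\[
\sum_n \frac{(1-r_n^2)(1-r^2)}{|1 - \overline\lambda_n r|^2} \le C\sum_n \frac{1-r_n^2}{|1-\overline\lambda_n r|} \le C'\sum_n \frac{1-r_n}{|1-\lambda_n|} < \infty
\]
for $r \ge r_0$, uniformly. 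So the Frostman hypothesis does exactly the work required, and assembling these pieces completes the proof.
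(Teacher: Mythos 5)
Your overall strategy is the same as the paper's: write $\|k^B_r\|^2=\frac{1-|B(r)|^2}{1-r^2}$, use $1-|b_{\lambda_n}(r)|^2=\frac{(1-r^2)(1-r_n^2)}{|1-\overline{\lambda}_n r|^2}$, and compare $1-|B(r)|^2$ with $\sum_n\bigl(1-|b_{\lambda_n}(r)|^2\bigr)$; your elementary product estimate ($1-\prod t_n\asymp\sum(1-t_n)$ with constants depending only on an upper bound for the sum) is a legitimate substitute for the paper's logarithmic comparison. The genuine gap is in how you secure that upper bound. You claim that the hypothesis $\measuredangle\lim_{z\to1}B(z)=\eta\in\T$ ``guarantees, via the Frostman condition \eqref{Frost}'', that $\sum_n\frac{1-r_n}{|1-\lambda_n|}<\infty$. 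That implication runs the wrong way: \eqref{Frost} is a \emph{sufficient} condition for the unimodular angular limit (this is exactly how the paper invokes it), not a necessary one. The Frostman condition is necessary only if one requires unimodular limits of \emph{all} subproducts; a single Blaschke product can have an angular limit in $\T$ at $1$ while the Frostman sum diverges (for instance, zeros in conjugate pairs $(1-2^{-n}/n)e^{\pm i 2^{-n}}$ give $B(r)$ real with $|B(r)|\to1$, hence a radial limit $\pm1$ and then, by Lindel\"of's theorem, an angular limit, yet $\sum_n\frac{1-r_n}{|1-\lambda_n|}\asymp\sum_n\frac1n=\infty$). So the step where your argument actually uses the hypothesis rests on an implication that is false in general, and the uniform bound on $\sum_n(1-|b_{\lambda_n}(r)|^2)$ is left unproved.

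The repair is short and is in substance what the paper does. Since $1-t\le\log(1/t)$ for $t\in(0,1]$, you have for every $r$ the one-sided bound $\sum_n\bigl(1-|b_{\lambda_n}(r)|^2\bigr)\le\log\frac{1}{|B(r)|^2}$, and the right-hand side tends to $0$ as $r\to1^-$ precisely because $|B(r)|\to|\eta|=1$; this yields the required uniform bound near $1$ directly from the hypothesis, with no Frostman sum at all. (The paper phrases this as the two-sided comparison $\log\frac{1}{|b_{\lambda_n}(r)|^2}\asymp 1-|b_{\lambda_n}(r)|^2$, where the harder inequality uses that the zeros eventually leave every Stolz angle $\Gamma_{\alpha,1}$, so $\inf_n|b_{\lambda_n}(r)|\ge\delta>0$; with your product inequality only the easy direction is needed.) The remaining ingredients of your proposal --- the reproducing kernel identity, the treatment of $r$ bounded away from $1$ (where both sides are bounded above and below, the sum being bounded above by the Blaschke condition since $|1-\overline{\lambda}_n r|\ge 1-r_0$), and the estimate $|1-\overline{\lambda}_n r|\gtrsim|1-\lambda_n|$ for $r$ near $1$ --- are fine, but the last of these is then no longer needed.
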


(The estimate extends naturally to a Stolz angle.)


\begin{proof}
Since $\measuredangle \lim_{z \to 1} B(z) = \eta \in \T$, the zeros of $B$ (after some point) can not lie in $\Gamma_{\alpha, 1}$. Thus if 
$$b_{\lambda}(z) = \frac{z - \lambda}{1 - \overline{\lambda} z},$$ then $\inf_{n \geq 1} |b_{\lambda_n}(r)| \geq \delta > 0$ and so
$$\log \frac{1}{|b_{\lambda}(r)|^2} \asymp 1 - |b_{\lambda_n}(r)|^2.$$
Use the well-known identity
$$1 - |b_{\lambda_n}(r)|^2 = \frac{(1 - r^2)(1 - |\lambda_{n}|^2)}{|1 - r \overline{\lambda_n} |^2},
$$
to get
\beqa
 \log|B(r)|^{-2}
 &=&\sum_{n \geq 1}\log\frac{1}{|b_{\lambda_n}(z)|^2}
 \asymp \sum_{n \geq 1}\frac{(1-|\lambda_n|^2)(1-|r|^2)}{|1-\overline{\lambda_n} r|^2}\\
 &\asymp&
 (1-r^2)\sum_{n \geq 1}\frac{(1-r_n^2)}{|1-\overline{\lambda_n} r|^2}.
\eeqa
Since $|B(r)|\to 1$ when $r\to 1^{-}$ the latter quantity goes to $0$ and so
\[
 \|k^B_r\|^2=\frac{1-|B(r)|^2}{1-r^2}
 \asymp  -\frac{\log|B(r)|^2}{1-r^2}\\
 \asymp\sum_{n \geq 1}\frac{1-r_n^2}{|1-\overline{\lambda_n} r|^2} \qedhere
\] 
\end{proof}

\section{Key examples}\label{S3nn}

We will prove a general growth result in Theorem \ref{thm3.1}. But just to give a more tangible approach to the subject, let us begin by obtaining growth estimates of functions in $(B H^2)^{\perp}$ for Blaschke products $B$ whose zeros are 
$$\lambda_{n} = (1 - x_n 2^{-2n}) e^{i 2^{-n}}, \quad x_{n} \downarrow 0,$$
which approaches $1$ very tangentially, or 
$$\lambda_{n} = (1 - \theta_{n}^{2}) e^{i \theta_n}, \quad 0 < \theta_n < 1, \quad \sum_{n \geq 1} \theta_n < \infty,$$
which (essentially) approaches $1$ along an oricycle. 

\noindent {\bf First class of examples:} $\Lambda=(\lambda_k)_{k \geq 1}$ with
$\lambda_k=r_ke^{i\theta_k}$ and
\bea\label{defseq}
 1-r_k=x_k\theta_k^2, \quad \theta_k=\frac{1}{2^k}, \quad k\in \N. 
\eea
Since $x_k \downarrow 0$,  the sequence $\Lambda$ goes tangentially to 1.
The faster $x_{k}$ decreases to zero, the more tangential the sequence $\Lambda$.
This also implies that
\[
 \sum_{n\ge 1}(1-|\lambda_n|)=\sum_{n\ge 1}(1-r_n)=\sum_{n\ge 1}
 \frac{x_n}{2^{2n}}<\infty,
\]
and so $\Lambda$ is indeed a Blaschke sequence.

We will need the well known Pythagorean type result:
if $\lambda = r e^{i \theta}$, $r \in (0, 1)$, $\rho \in (0, 1]$, then
\bea\label{Pythag-Lemma}
|1 - \rho \lambda|^2 \asymp
  (1 - \rho r)^2 + \theta^2\asymp  ((1 - \rho r)+ \theta)^2, 
 \quad \rho \approx 1, r \approx 1, \theta \approx 0.
\eea

Observe that using \eqref{Pythag-Lemma} we get
\[
 |1-\lambda_k| \asymp \sqrt{(1-r_k)^2+\theta_k^2} 
  \asymp (1-r_k)+\theta_k 
  \asymp  x_k\theta_k^2+\theta_k
  \asymp \theta_k.
\]
Hence 
\[
\sum_{n \geq 1} \frac{1 - |\lambda_{n}|}{|1 - \lambda_{n}|} 
 \asymp\sum_{n\ge 1}\frac{1-r_n}{\theta_k}=\sum_{n\ge 1} \theta_n x_n < \infty
\]
and so condition \eqref{Frost} is satisfied thus ensuring 
$\measuredangle \lim_{z \to 1} B(z) = \eta \in \T$. 
Similarly, 
\[
\sum_{n \geq 1} \frac{1 - |\lambda_{n}|}{|1 - \lambda_{n}|^2} 
 \asymp \sum_{n\ge 1}x_n.
\]
So, in light of the Ahern-Clark result \eqref{AC1}, we
will be interested in the ``bad behavior'' scenario when $\sum_{n\ge 1} x_n=+\infty$.


\begin{theorem} \label{C-upper-growth}
Consider the Blaschke product whose zeros are 
$$\lambda_{n} = (1 - x_n 2^{-2n}) e^{i 2^{-n}}, \quad 
x_{n}\downarrow 0.$$
Set $$\sigma_{N} := \sum_{n = 1}^{N} x_n,$$
and let $\vp_0$ be the piecewise affine function with 
$\vp_0(N) = \sigma_N$, and let $\vp$ be defined by
$$\vp(y) := \vp_0\left(\log_2 \frac{1}{1 - y}\right).$$
Then 
$$
 \|k_z^B\|\asymp \sqrt{\vp(|z|)},\quad z\in \Gamma_{\alpha,1},
$$
and so every $f \in (B H^2)^{\perp}$ satisfies 
$$|f(z)| \lesssim \sqrt{\phi(|z|)}, \quad z \in \Gamma_{\alpha, 1}.$$
\end{theorem}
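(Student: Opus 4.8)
The plan is to combine Lemma~\ref{KeyLemma} with a careful estimate of the sum
$S(r):=\sum_{n\ge 1}\frac{1-r_n^2}{|1-\overline{\lambda_n}r|^2}$ along the radius $z=r$, and then remark that the argument is unchanged (up to constants) in a Stolz angle. First I would record that for our sequence $1-r_n=x_n2^{-2n}$ and $\theta_n=2^{-n}$, so by the Pythagorean estimate \eqref{Pythag-Lemma} we have $|1-\overline{\lambda_n}r|\asymp (1-r_nr)+\theta_n\asymp (1-r)+(1-r_n)+\theta_n\asymp (1-r)+2^{-n}$, since $(1-r_n)=x_n2^{-2n}$ is negligible compared to $2^{-n}$. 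Hence the numerator $1-r_n^2\asymp 1-r_n=x_n2^{-2n}$ gives
\[
S(r)\asymp \sum_{n\ge 1}\frac{x_n\,2^{-2n}}{\big((1-r)+2^{-n}\big)^2}.
\]

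The heart of the proof is then to show this last sum is $\asymp \vp_0\big(\log_2\frac{1}{1-r}\big)=\vp(r)$. Write $N=N(r):=\log_2\frac{1}{1-r}$, so that $2^{-n}\le 1-r$ iff $n\ge N$. I would split the sum at $n=N$. For $n\le N$ the denominator is $\asymp (1-r)^2=2^{-2N}$, so those terms contribute $\asymp 2^{2N}\sum_{n\le N}x_n2^{-2n}$. For $n\ge N$ the denominator is $\asymp 2^{-2n}$, so those terms contribute $\asymp \sum_{n\ge N}x_n$. The tail $\sum_{n\ge N}x_n$ is not obviously comparable to $\sigma_N=\sum_{n\le N}x_n$ in general, but here the monotonicity $x_n\downarrow 0$ is exactly what saves us: since $x_n$ is decreasing, $2^{2N}\sum_{n\le N}x_n2^{-2n}\asymp x_N\asymp$ the last term of $\sigma_N$, which is $\le \sigma_N$; and one checks (again using $x_n\downarrow$, via $\sum_{n\ge N}x_n2^{-2(n-N)}$ being a weighted average dominated by a geometric series) that the head sum dominates $x_N$ from below as well, so both pieces are squeezed between $c\,x_N$ and $C\,\sigma_N$. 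To close the gap I would argue that $\sigma_N\asymp$ the head contribution directly: $\sum_{n\le N}x_n2^{-2n}$, multiplied by $2^{2N}$, is a convex combination of the $x_n$ with geometrically increasing weights, hence $\asymp x_N$ only if $\sigma_N\asymp x_N$; in the general case one instead keeps the two regimes and observes $S(r)\asymp \sum_{n\le N}x_n 2^{-2(N-n)}+\sum_{n\ge N}x_n$, and shows \emph{this} quantity is $\asymp\sigma_N$ using $x_n\downarrow 0$ (the first sum is $\le\sigma_N$ trivially and $\ge x_N$; the key point is $\sum_{n>N}x_n\le \sigma_{2N}-\sigma_N$ plus a dyadic tail bound, and concavity/subadditivity of $\vp_0$ from $x_n\downarrow$ gives $\sigma_{2N}\le 2\sigma_N$). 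Thus $S(r)\asymp\sigma_{N(r)}=\vp_0(\log_2\frac1{1-r})=\vp(r)$.

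Having established $S(r)\asymp\vp(r)$, Lemma~\ref{KeyLemma} immediately yields $\|k_r^B\|^2\asymp\vp(r)$, i.e.\ $\|k_r^B\|\asymp\sqrt{\vp(r)}$. For the Stolz-angle version I would invoke the parenthetical remark after Lemma~\ref{KeyLemma}: for $z\in\Gamma_{\alpha,1}$ one has $1-|z|\asymp|1-z|\asymp|1-\bar z|$, and $|1-\overline{\lambda_n}z|\asymp(1-|z|)+2^{-n}$ by the same Pythagorean estimate (now with $\theta_n$ the argument of $\lambda_n$ and the argument of $z$ both small and comparable up to the Stolz constant), so the entire computation goes through with constants depending only on $\alpha$, giving $\|k_z^B\|\asymp\sqrt{\vp(|z|)}$ for $z\in\Gamma_{\alpha,1}$. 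Finally, the growth estimate for $f$ is a one-line consequence of \eqref{trivial-est}: $|f(z)|=|\langle f,k_z^B\rangle|\le\|f\|\,\|k_z^B\|\lesssim\sqrt{\vp(|z|)}$ for all $f\in(BH^2)^\perp$ and $z\in\Gamma_{\alpha,1}$.

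The main obstacle is the middle step: identifying $S(r)$ with $\sigma_{N(r)}$ up to multiplicative constants. The two-sided comparison requires genuinely using $x_n\downarrow 0$ twice — once to control the "head" sum $\sum_{n\le N}x_n2^{-2(N-n)}$ (where the geometric weights concentrate mass near $n=N$, so monotonicity is needed to prevent it collapsing to $x_N\ll\sigma_N$... actually to show it is comparable to $\sigma_N$ one really needs the doubling $\sigma_{2N}\asymp\sigma_N$, which follows from $x_n\downarrow$) and once to control the "tail" $\sum_{n>N}x_n$ (bounding it by a constant times $\sigma_N$, again via a dyadic decomposition and doubling). Getting the bookkeeping of these dyadic blocks right, and making sure the piecewise-affine interpolation $\vp_0$ behaves well under the substitution $y\mapsto\log_2\frac1{1-y}$ (so that $\vp$ is the right continuous interpolant and $\vp(r_n)\asymp\sigma_n$), is where the real work lies; everything else is a direct appeal to Lemma~\ref{KeyLemma}, the Pythagorean estimate \eqref{Pythag-Lemma}, and \eqref{trivial-est}.
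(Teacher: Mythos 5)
Your overall strategy is the paper's: reduce to $S(r)=\sum_{n\ge1}\frac{1-r_n^2}{|1-\overline{\lambda_n}r|^2}$ via Lemma \ref{KeyLemma}, use \eqref{Pythag-Lemma} to get $|1-\overline{\lambda_n}r|\asymp(1-r)+2^{-n}$, evaluate at $r=\rho_N=1-2^{-N}$, extend to the Stolz angle, and finish with \eqref{trivial-est}. But the central dyadic estimate is carried out with the two regimes interchanged, and this is a genuine error, not a bookkeeping slip. With $1-r=2^{-N}$: for $n\le N$ one has $2^{-n}\ge 1-r$, so the denominator $\bigl((1-r)+2^{-n}\bigr)^2\asymp 2^{-2n}$ and each such term is $\asymp x_n$; for $n>N$ one has $2^{-n}<1-r$, so the denominator is $\asymp(1-r)^2=2^{-2N}$ and each such term is $\asymp x_n2^{-2(n-N)}$. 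You asserted the opposite in both ranges, which leads you to the claim $S(\rho_N)\asymp\sum_{n\le N}x_n2^{-2(N-n)}+\sum_{n\ge N}x_n$. That claim is false exactly in the situation the theorem is about: when $\sum_n x_n=\infty$ (e.g.\ $x_n=1/n$, as in Example \ref{est-loglog}) the second sum diverges for every $N$, while $S(\rho_N)\asymp\sigma_N<\infty$; moreover the first sum, having geometrically decaying weights and bounded $x_n$, stays bounded and cannot produce $\sigma_N\to\infty$. No doubling argument ($\sigma_{2N}\le 2\sigma_N$) or dyadic tail bound can rescue $\sum_{n>N}x_n\lesssim\sigma_N$, since that tail is generically infinite.

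With the regimes put the right way round the proof becomes the paper's and is much simpler than what you attempt: the head ($n\le N$) contributes $\asymp\sum_{n\le N}x_n=\sigma_N$ on the nose, and the tail ($n>N$) contributes $2^{2N}\sum_{n>N}x_n2^{-2n}\lesssim x_{N+1}$, which is bounded (this is the only place $x_n\downarrow0$, or the weaker condition \eqref{WeakCond}, enters the splitting) and hence negligible against $\sigma_N\to\infty$. Monotonicity is then used once more to get $\sigma_{N-1}\asymp\sigma_N\asymp\sigma_{N+1}$, so that the piecewise affine $\vp_0$ satisfies $\vp_0(x)\asymp\sigma_N$ for $N-1\le x\le N+1$; this is what allows passage from the points $\rho_N$ to arbitrary $z\in\Gamma_{\alpha,1}$ (the paper does this via \eqref{tech-est}, comparing $\|k^B_z\|$ with $\|k^B_{\rho_N}\|$ for a nearby $\rho_N$; your variant of redoing the Pythagorean estimate inside the Stolz angle would also work). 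Your final step, $|f(z)|\le\|f\|\,\|k^B_z\|\lesssim\sqrt{\vp(|z|)}$ from \eqref{trivial-est}, is fine.
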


Note that $\vp_0$ is actually a concave function.

Before discussing the proof, here are two concrete
examples showing how the growth slows down when approaching
the Ahern-Clark situation, i.e., the summability of the sequence $(x_n)_{n \geq 1}$.

\begin{example} \label{est-loglog}
\begin{enumerate}
\item If $B$ is a Blaschke product whose zeros are 
$$\lambda_{n} = (1 - x_n 2^{-2n}) e^{i 2^{-n}}, \quad x_n = \frac{1}{n},$$
 then $$\sigma_{N} = \sum_{n = 1}^{N} \frac{1}{n} \asymp \log N$$
and so every  $f \in (B H^2)^{\perp}$ satisfies the growth condition 
$$|f(r)| \lesssim \sqrt{\log \log \frac{1}{1 - r}}, \quad r \to 1^{-}.$$
\item If the zeros of $B$ are 
$$\lambda_{n} = (1 - x_n 2^{-2n}) e^{i 2^{-n}}, \quad x_n = \frac{1}{n \log n},$$
then 
$\sigma_{N} \asymp \log \log N$ and so every $f \in (B H^2)^{\perp}$ satisfies 
$$|f(r)| \lesssim\sqrt{ \log \log \log \frac{1}{1 - r}}, \quad r \to 1^{-}.$$
\end{enumerate}
\end{example}

\begin{proof}[Proof of Theorem \ref{C-upper-growth}]
Set $\rho_N=1-2^{-N}$ and $\theta_k=2^{-k}$.
Using \eqref{Pythag-Lemma} 
we have
\beqa
 |1-\rho_N\lambda_k|^2 
 &\asymp& (\theta_k+(1-\rho_Nr_k))^2
 =(\theta_k+(1-\rho_N(1-x_k\theta_k^2)))^2\\
 &=&(\theta_k+(1-\rho_N)+\rho_Nx_k\theta_k^2)))^2,
\eeqa
and, by our assumption $x_k\theta_k^2 \ll \theta_k$ when $k\to\infty$, we get
\bea\label{eq1p4}
 |1-\rho_N\lambda_k|^2\asymp  (\theta_k+(1-\rho_N))^2,
\eea
Hence
\bea\label{estimquot}
 \frac{1-r_k^2}{|1-\rho_N\lambda_k|^2}
 &\asymp& \frac{x_k\theta_k^2}{(\theta_k+(1-\rho_N))^2} 
 =\frac{x_k\theta_k^2}{(\theta_k+\theta_N)^2}\asymp
 \left\{\begin{array}{ll}
	\frac{\dst x_k\theta_k^2}{\dst \theta_k^2} & \text{if }k\le N\\
	\frac{\dst x_k\theta_k^2}{\dst \theta_N^2} & \text{if }k > N
 \end{array}
 \right.\nn\\
  &\asymp& \left\{\begin{array}{ll}
	 x_k & \text{if }k\le N,\\
	\frac{\dst x_k\theta_k^2}{\dst \theta_N^2} & \text{if }k > N.
 \end{array}
 \right.
\eea
Thus we can split the sum in Lemma \ref{KeyLemma} into two parts
\[
 \|k_{\rho_N}^B\|^2\asymp
 \sum_{k\ge 0}\frac{1-r_k^2}{|1-\rho_N\lambda_k|^2}
 \asymp \sum_{k\le N}x_k
 +2^{2N}\sum_{k\ge N+1}{x_k\theta_k^2}.
\]
The first term is exactly $\sigma_N$ while the second is bounded by
a uniform constant (recall that we are assuming $x_n\downarrow 0$
and $\theta_k = 2^{-k}$) and hence negligible with respect to $\sigma_N$ which we assume increases to infinity.
This immediately gives us
the required estimate for $\rho_N=1-1/2^N$:
\[
 \|k^B_{\rho_N}\|^2 \asymp \sigma_N
 =\vp_0(N)=\vp(\rho_N).
\]
In order to get the same estimate for $z\in \Gamma_{\alpha, 1}$ we need
the following well known result: 
\begin{equation} \label{tech-est}
|b_{\lambda}(\mu)|\le \eps<1 \Rightarrow
\frac{1-\eps}{1+\eps}\le
 \frac{|1-\overline{\lambda}z|}{|1-\overline{\mu}z|}
 \le \frac{1+\eps}{1-\eps}, \quad z \in \D.
 \end{equation}
 
Now let 
$z\in \Gamma_{\alpha, 1}$ and suppose that $|z|>1/2$.
Then there exists an $N$ such that $$|b_z(\rho_N)|=|b_z(1-2^{-N})|\le \delta < 1$$ (where
$\delta$ only depends on the opening of the Stolz angle). 
Hence 
\begin{equation} \label{QQQ}
 \|k^B_z\|^2\asymp\sum_{n \geq 1}\frac{1-r_n^2}{|1-\overline{\lambda}_nz|^2}
 \asymp  \sum_{n \geq 1}\frac{1-r_n^2}{|1-\overline{\lambda}_n\rho_N|^2}
 \asymp\|k_{\rho_N}\|^2,
\end{equation}
and so
\[
 \|k^B_z\|^2 \asymp\|k^B_{\rho_N}\|^2 \asymp\sigma_N.
\]  
Since $x_n\downarrow 0$ we have 
$\sigma_N\asymp\sigma_{N+1}\asymp\sigma_{N-1}$ and 
so, by the construction of $\vp_0$, we also have $$\vp_0(x)\asymp \vp_0(N)=\sigma_N, \quad 
N-1\le x\le N+1.$$ Taking into account that $\rho_{N-1}\le|z|
\le\rho_{N+1}$, we get
\[
 \|k^B_z\|^2 \asymp\|k^B_{\rho_N}\|^2 \asymp\sigma_N
 \asymp \vp(|z|).
 \qedhere
\]
\end{proof}
 
It should be noted that Theorem \ref{C-upper-growth} works in a broader context 
assuming less
``tangentiality''. Indeed,
it is clear from the proof that the hypothesis $x_n\downarrow 0$
can be weakened to
\bea\label{WeakCond}
 \sup_{n\ge 1}\frac{x_{n+1}}{x_n}<2,
\eea
since in this case we still have
$x_n\theta_n^2 \ll \theta_n$, $2^{2N}\sum_{k\ge N+1}x_k\theta_k^2
\lesssim x_N \ll \sigma_N$ and $\sigma_N\le \sigma_{N+1}
=\sigma_N+x_{N+1}\le \sigma_N+2x_N\le 2\sigma_N$.

We would now like to consider the sharpness of the growth in Theorem 
\ref{C-upper-growth}.

\begin{theorem} \label{thm4.2}
Suppose $B$ is a Blaschke product whose zeros satisfy the conditions in 
Theorem \ref{C-upper-growth}. Then for every $\varepsilon > 0$ there exists an $f \in (B H^2)^{\perp}$ such that 
\begin{equation} \label{desired-rho-z-9}
|f(z)| \gtrsim \sqrt{\frac{\vp(|z|)}{\log^{1+\eps} \vp(|z|)}}, \quad z \in \Gamma_{\alpha, 1}.
\end{equation}
\end{theorem}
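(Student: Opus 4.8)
The plan is to construct $f$ explicitly as a linear combination of reproducing kernels at the tangential sample points $\rho_N=1-2^{-N}$, choosing coefficients so that the resulting function is large precisely on the sequence $\rho_N$, and then to use the comparability estimate \eqref{QQQ} to transfer the growth at the $\rho_N$'s to all of $\Gamma_{\alpha,1}$. Concretely, I would pick a sparse subsequence $N_1 < N_2 < \cdots$ of indices and set $f = \sum_j c_j k^B_{\rho_{N_j}}$, which automatically lies in $(BH^2)^{\perp}$. The point evaluation $f(\rho_{N_i}) = \sum_j c_j \langle k^B_{\rho_{N_j}}, k^B_{\rho_{N_i}}\rangle$ has a ``diagonal'' term $c_i \|k^B_{\rho_{N_i}}\|^2 \asymp c_i\,\sigma_{N_i}$, so if the sequence $(\rho_{N_j})$ is chosen sparse enough for the off-diagonal Gram matrix entries to be summable (i.e.\ the subsequence is close to being a Carleson/interpolating sequence, which is possible since the $\rho_N$ tend to $1$ radially and one may thin them out), the diagonal term dominates and $|f(\rho_{N_i})| \gtrsim c_i \sigma_{N_i}$.

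The next step is to choose the coefficients $c_j$ to make $f \in H^2$ while keeping $c_i\sigma_{N_i}$ as large as possible relative to $\sqrt{\sigma_{N_i}}$. Since $\|f\|^2 \asymp \sum_j c_j^2 \|k^B_{\rho_{N_j}}\|^2 \asymp \sum_j c_j^2 \sigma_{N_j}$ (again using sparseness to control cross terms), I need $\sum_j c_j^2 \sigma_{N_j} < \infty$. Taking $c_j^2 \sigma_{N_j} \asymp 1/(j \log^{1+\eps} j)$ makes the series converge, and then $c_j \sigma_{N_j} \asymp \sqrt{\sigma_{N_j}/(j\log^{1+\eps}j)}$. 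The remaining arithmetic is to arrange, by a suitable spacing of the indices $N_j$, that $j \asymp \log \sigma_{N_j}$ — for the logarithmic and iterated-logarithmic $\sigma_N$ of the examples this amounts to spacing $N_j$ geometrically or doubly-geometrically — so that $c_j\sigma_{N_j} \asymp \sqrt{\sigma_{N_j}/\log^{1+\eps}\sigma_{N_j}}$, exactly the target bound at $z=\rho_{N_j}$. (More care with a variable-$\eps$ trick or re-labelling may be needed to cover a general $(x_n)$, but the monotonicity and concavity of $\vp_0$ give enough room.)

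Finally, I would upgrade the lower bound from the discrete sequence $\rho_{N_j}$ to an arbitrary $z \in \Gamma_{\alpha,1}$. For $z$ in the Stolz angle with $|z|$ near $\rho_{N_j}$, the estimate \eqref{tech-est} together with the computation \eqref{QQQ} gives $\langle f, k^B_z\rangle \asymp \langle f, k^B_{\rho_{N_j}}\rangle$ up to constants depending only on $\alpha$ — more precisely one writes $f(z) = \langle f, k^B_z\rangle$ and compares $k^B_z$ with $k^B_{\rho_{N_j}}$ termwise in the kernel expansion, exactly as in the proof of Theorem \ref{C-upper-growth}. Since $\vp(|z|) \asymp \sigma_{N_j}$ for $\rho_{N_j-1}\le |z|\le \rho_{N_j+1}$ and the $N_j$ are spread out, every $z\in\Gamma_{\alpha,1}$ with $|z|$ large is within bounded hyperbolic distance of some $\rho_{N_j}$, yielding \eqref{desired-rho-z-9}.

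The main obstacle I anticipate is the off-diagonal control of the Gram matrix $\langle k^B_{\rho_{N_i}}, k^B_{\rho_{N_j}}\rangle$: one must show that a sufficiently sparse subsequence of the radii $\rho_N$ behaves like an interpolating sequence for $(BH^2)^{\perp}$ (or at least has a dominated, summable Gram matrix), so that both $|f(\rho_{N_i})|$ is bounded below by the diagonal term and $\|f\|_{H^2}^2$ is bounded above by the diagonal sum. This requires the reproducing kernels $k^B_{\rho_N}$ to be genuinely ``spread out'' in $H^2$, which should follow from the fact that $|B(\rho_N)|$ is bounded away from $0$ together with a Schur-test / Gershgorin argument once the $N_j$ are chosen with large enough gaps; making the gap condition precise while still forcing $j \asymp \log\sigma_{N_j}$ is the delicate balancing act of the proof.
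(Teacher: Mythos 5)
There is a genuine gap, and it sits exactly where you flagged ``the delicate balancing act.'' Your construction forces two requirements that are incompatible. On one hand, to control the Gram matrix of the normalized kernels $k^B_{\rho_{N_j}}/\|k^B_{\rho_{N_j}}\|$ you must take the subsequence $(N_j)$ very sparse (for the model example $x_n=1/n$ your calibration $j\asymp\log\sigma_{N_j}$ forces $N_j$ to grow doubly exponentially); and sparseness is not optional, since for the full radial sequence the unconditionality you are implicitly invoking actually \emph{fails} for these Blaschke products -- this is precisely the content of Section \ref{section5}, and the point there is that when $|B(\rho_N)|\to 1$ the question is not governed by the Carleson condition alone but by invertibility of a Toeplitz operator, so ``thin it out until it is interpolating'' is itself a nontrivial unproved claim, not a routine Schur/Gershgorin step. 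On the other hand, the statement of Theorem \ref{thm4.2} demands the lower bound for \emph{every} $z\in\Gamma_{\alpha,1}$, and your transfer step asserts that every such $z$ with $|z|$ large lies within bounded hyperbolic distance of some $\rho_{N_j}$. That is false for a sparse subsequence: a point $\rho_K$ with $N_j\ll K\ll N_{j+1}$ has pseudo-hyperbolic distance tending to $1$ from all the $\rho_{N_i}$, so \eqref{tech-est} and the argument of \eqref{QQQ} do not apply, and nothing in your construction prevents $|f|$ from being small there (your diagonal-dominance argument gives lower bounds only at the sparse nodes; in between, cancellation among the kernel summands is uncontrolled). So the two halves of the plan -- sparsity for the Gram matrix, density for the covering -- cannot both hold, and the proof as proposed does not yield \eqref{desired-rho-z-9}.

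For contrast, the paper's proof avoids this tension by building $f$ from the reproducing kernels at the \emph{zeros} $\lambda_n$ of $B$, not at the radial points. Since $x_n\downarrow 0$, the zero sequence is interpolating, so by Shapiro--Shields the normalized kernels $k_{\lambda_n}/\|k_{\lambda_n}\|$ form an unconditional basis of $(BH^2)^{\perp}$, and any $\ell^2$ coefficient sequence is admissible; no thinning is needed. The lower bound at \emph{every} $\rho_N$ (not just a sparse subsequence) then comes from a phase-alignment argument: for $n\le N$ the quantities $1-\rho_N r_n e^{-i\theta_n}$ all point in directions uniformly close to the positive imaginary axis, so the terms add essentially constructively and $|f_{\alpha}(\rho_N)|\gtrsim\sum_{n\le N}\alpha_n\sqrt{x_n}-O(1)$, while the tail $n>N$ is uniformly bounded. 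Choosing $\alpha_n=\sqrt{x_n/(\sigma_n\log^{1+\eps}\sigma_n)}$ gives both $\alpha\in\ell^2$ (by a telescoping estimate) and the bound $\sqrt{\sigma_N/\log^{1+\eps}\sigma_N}$ at every $N$, after which the passage to all of $\Gamma_{\alpha,1}$ is legitimate because consecutive points $\rho_N$ are at bounded hyperbolic distance. If you want to salvage your approach, you would have to replace the sparse radial kernels by building blocks for which a pointwise lower bound holds on the whole Stolz angle, which in effect pushes you back to an alignment argument of the paper's type.
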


\begin{proof}
Functions in $(B H^2)^{\perp}$ behave rather nicely if the sequence $\Lambda$ is 
interpolating. 
To see this, recall that $x_n \downarrow 0$  and so
$$
 \varlimsup_{k \to \infty} \frac{x_{k+1}}{x_k}\le 1.
$$
Hence
\[ 
 |b_{r_k}(r_{k+1})|= \frac{x_{k}2^{-2k}-x_{k+1}2^{-2(k+1)}}
  {x_{k}2^{-2k}+x_{k+1}2^{-2(k+1)}}
 =\frac{1-\frac{1}{4}\frac{x_{k+1}}{x_{k}}}{1+\frac{1}{4}\frac{x_{k+1}}{x_{k}}}
 \ge 1 - \frac{1}{4}=\frac{3}{4} \mbox{ (asymptotically)}.
\]
Thus the sequence of moduli is pseudo-hyperbolically separated which implies that
the sequence of moduli  is interpolating -- as  will be the one spread out by
the arguments i.e., $\Lambda$.

Now, since $\Lambda$ is an interpolating sequence, we also know that
 the normalized reproducing kernels 
\[
 K_n:=\frac{k_{\lambda_n}}{\|k_{\lambda_n}\|}=\frac{\sqrt{1-|\lambda_n|^2}}
 {1-\overline{\lambda_n}z},\quad n\in\N,
\]
form an unconditional basis for $(B H^2)^{\perp}$. This is essentially
a result by Shapiro and Shields \cite{SS}, see also \cite[Section
3]{Nik2} and in particular \cite[Exercice C3.3.3(c)]{Nik2}. 
Hence for every $f\in (B H^2)^{\perp}$, there is a sequence $\alpha := (\alpha_n)_{n \geq 1} \in \ell^2$ such that
\begin{equation} \label{f-alpha-defn}
 f_{\alpha}(z):=\sum_{n \geq 1} \alpha_n \frac{k_{\lambda_n}(z)}{\|k_{\lambda_n}\|}
 =\sum_{n \geq 1}\alpha_n\frac{\sqrt{1-r_n^2}}{1-r_ne^{-i\theta_n}z}.
\end{equation}
We will examine this series for $z=r\in [0,1)$ (it could be necessary at some
point to require $r\ge r_0>0$).
In what follows we will assume that $\alpha_n>0$.
Note that the argument $1-e^{-i\theta_n}rr_n$  is positive
(this is $\gamma_n$ in Figure 1).

\begin{figure}
\begin{picture}(0,0)%
\includegraphics[width=2in]{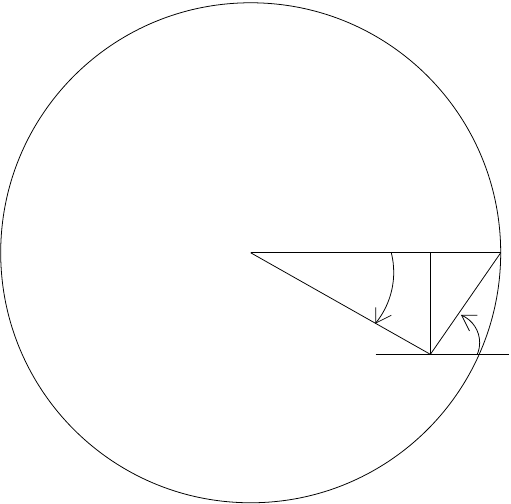}%
\end{picture}%
\setlength{\unitlength}{1973sp}%
\begingroup\makeatletter\ifx\SetFigFont\undefined%
\gdef\SetFigFont#1#2#3#4#5{%
  \reset@font\fontsize{#1}{#2pt}%
  \fontfamily{#3}\fontseries{#4}\fontshape{#5}%
  \selectfont}%
\fi\endgroup%
\begin{picture}(4895,4814)(2993,-6368)
\put(6200,-4300){\makebox(0,0)[lb]{\smash{{\SetFigFont{7}{8.4}{\familydefault}{\mddefault}{\updefault}{\color[rgb]{0,0,0}$-\theta_n$}%
}}}}
\put(7800,-4100){\makebox(0,0)[lb]{\smash{{\SetFigFont{7}{8.4}{\familydefault}{\mddefault}{\updefault}{\color[rgb]{0,0,0}$1$}%
}}}}
\put(7200,-4820){\makebox(0,0)[lb]{\smash{{\SetFigFont{6}{7.2}{\rmdefault}{\mddefault}{\updefault}{\color[rgb]{0,0,0}$\gamma_n$}%
}}}}
\put(6900,-5200){\makebox(0,0)[lb]{\smash{{\SetFigFont{7}{8.4}{\familydefault}{\mddefault}{\updefault}{\color[rgb]{0,0,0}$rr_ne^{-i\theta_n}$}%
}}}}
\end{picture}%
\caption{angles}
\end{figure}

Fix $\rho_N=1-2^{-N}$. Then
\bea\label{decompfalpha}
 f_{\alpha}(\rho_N) 
  = \sum_{n=1}^N \alpha_n 
   \frac{\sqrt{1-r_n^2}}{1-\rho_Nr_ne^{-i\theta_n}}
 +\sum_{n>N} \alpha_n 
   \frac{\sqrt{1-r_n^2}}{1-\rho_Nr_ne^{-i\theta_n}}.
\eea
Let us show that the second term is bounded by
a constant.
By definition $1-r_n=x_n\theta_n^2=x_n2^{-2n}$, and from 
\eqref{eq1p4} $|e^{i\theta_n}-\rho_Nr_n|\asymp \theta_n+(1-\rho_N)
\asymp 1-\rho_N$ for $n\ge N$. 
{
In particular,
\[
\left |\sum_{n>N} \alpha_n 
   \frac{\sqrt{1-r_n^2}}{1-\rho_Nr_ne^{-i\theta_n}}\right|
 \le  \sum_{n>N} \alpha_n\frac{\sqrt{1-r_n^2}}{|e^{i\theta_n}-\rho_Nr_n|}
 \asymp \sum_{n>N} \alpha_n\frac{\sqrt{x_n}\theta_n}{1-\rho_N}
 =2^N\sum_{n>N} \alpha_n\sqrt{x_n}\frac{1}{2^n}.
\]
Now since the terms $\alpha_n\sqrt{x_n}$ are bounded, the last expression is uniformly
bounded in $N$ by a positive constant $M$.
}

Consider the first sum in \eqref{decompfalpha}.
We will show that for $1\le n\le N$ the argument
of $1-e^{-i\theta_n}\rho_Nr_n$ is uniformly close to
$\pi/2$ (or at least from a certain $n_0$ on), meaning that
$1-e^{-i\theta_n}\rho_Nr_n$ points in a direction uniformly close to
the positive imaginary axis.
To this end set
$\gamma_n=\arg(1-\rho_Nr_ne^{-i\theta_n}),$ then
\beqa
 \tan\gamma_n&=&\frac{r_n\rho_N\sin\theta_n}{1-r_n\rho_N\cos\theta_n}
 \simeq \frac{\theta_n}{1-(1-x_n\theta_n^2)(1-\theta_N)(1-\theta_n^2/2+o(\theta_n^2))}\\
 &=&\frac{\theta_n}{x_n\theta_n^2+\theta_N+\theta_n^2/2+o(\theta_n^2)}
  \asymp\frac{\theta_n}{\theta_n^2+\theta_N} 
  \asymp \left\{
 \begin{array}{ll}
 \frac{\dst 1}{\dst\theta_n} &\text{if }n\le N/2\\
 \frac{\dst\theta_n}{\dst\theta_N} &\text{if }N/2<n\le N. 
 \end{array}
 \right.\\
 &=& \left\{\begin{array}{ll}
 2^n &\text{if }n\le N/2\\
 2^{N-n} &\text{if }N/2<n\le N. 
 \end{array}
 \right.\\
 &\ge& 1.
\eeqa
Hence the argument of $1-\rho_Nr_ne^{-i\theta_n}$ is uniformly bounded away
from zero and less than $\pi/2$ so that
\[
 1\ge \sin \arg(1-\rho_Nr_ne^{-i\theta_n})\ge \eta>0.
\]
In particular, for $1\le n\le N$,
\[
\left |\Im \frac{1}{1-\rho_Nr_ne^{-i\theta_n}}\right|\asymp
 \frac{1}{|1-\rho_Nr_ne^{-i\theta_n}|} \asymp \frac{1}{\theta_n+(1-\rho_N)}
 \asymp\frac{1}{\theta_n}.
\]
This implies that
\beqa
 |f_{\alpha}(\rho_N)|
 &=&\left|\sum_{n\ge 1} \alpha_n 
   \frac{\sqrt{1-r_n^2}}{1-\rho_Nr_ne^{-i\theta_n}}\right|
 \ge \left|\sum_{n=1}^N \alpha_n 
   \frac{\sqrt{1-r_n^2}}{1-\rho_Nr_ne^{-i\theta_n}}\right|
  - \left|\sum_{n>N} \alpha_n 
   \frac{\sqrt{1-r_n^2}}{1-\rho_Nr_ne^{-i\theta_n}}\right|\\
 &\ge&\left|\sum_{n=1}^N \alpha_n 
   \frac{\sqrt{1-r_n^2}}{1-\rho_Nr_ne^{-i\theta_n}}\right|
  - M
  \asymp \sum_{n=1}^N \alpha_n \sqrt{1-r_n^2}\times
  \left|\Im \frac{1}{1-\rho_Nr_ne^{-i\theta_n}}\right|-M\\
 &\asymp&\sum_{n=1}^N \alpha_n\frac{\sqrt{x_n}\theta_n}{\theta_n}-M\\
 &=&\sum_{n=1}^N \alpha_n\sqrt{x_n}-M.
\eeqa

As we will see, for a specific choice of sequence $(\alpha_n)_{n \geq 1}$,  the sum
$\sum_{n=1}^N \alpha_n\sqrt{x_n}$ will tend to infinity implying that
in such a situation the constant $M$ is negligible and
\begin{equation} \label{replace-rho-z}
 |f_{\alpha}(\rho_N)|\gtrsim\sum_{n=1}^N \alpha_n\sqrt{x_n}.
\end{equation}
Let us discuss the following choice for $\alpha_n$:
\[
 \alpha_n:=\sqrt{\frac{x_n}{\sigma_n\log^{1+\eps}\sigma_n}}.
\]
We need to show two things (i) we get the desired lower estimate in the statement of the theorem; and (ii)  $(\alpha_n)_{n \geq 1} \in \ell^2$.
Let us begin with the lower estimate. Observe that $\sigma_{N}$ is
increasing and so
\beqa
 \sum_{n=1}^N\alpha_n\sqrt{x_n}
 &=&\sum_{n=1}^N\frac{\sqrt{x_n}}{\sqrt{\sigma_n\log^{1+\eps}\sigma_n}} 
  \sqrt{x_n}
 =\sum_{n=1}^N\frac{x_n}{\sqrt{\sigma_n\log^{1+\eps}\sigma_n}}
 \ge \frac{1}{\sqrt{\sigma_N\log^{1+\eps}\sigma_N}}\sum_{n=1}^Nx_n\\
 &=&\frac{\sigma_N}
 {\sqrt{\sigma_N\log^{1+\eps}\sigma_N}}\\
 &=&\sqrt{\frac{\sigma_N}{\log^{1+\eps}\sigma_N}}.
\eeqa
This proves that 
$$|f(\rho_N)| \gtrsim \sqrt{\frac{\sigma_N}{\log^{1+\eps}\sigma_N}}.$$
To get the desired inequality in \eqref{desired-rho-z-9} (i.e., replace $\rho_N$ with $z \in \Gamma_{\alpha, 1}$) go back to the argument which proved the inequality in \eqref{replace-rho-z} and use the argument used to prove \eqref{QQQ}.

To show that $(\alpha_{n})_{n \geq 1} \in \ell^2$, observe that
\[
 \sum_{n=1}^N\alpha_n^2=\sum_{n=1}^N
  \frac{x_n} {{\sigma_n\log^{1+\eps}\sigma_n}}
 =\sum_{n=1}^N\frac{\sigma_n-\sigma_{n-1}}{{\sigma_n\log^{1+\eps}\sigma_n}},
\]
where we set $\sigma_0=0$.

Since $-g'$ is decreasing, where 
\[
 g(t)=\frac{1}{\log^{\eps}(t)}, \quad t \in [1, \infty),
\]
and using the fact that $(\sigma_n)_{n \geq 1}$ is increasing,
is it possible to show that
\bea\label{estimsigma'}
 \frac{\sigma_n-\sigma_{n-1}}{{\sigma_n\log^{1+\eps}\sigma_n}}
 \le \frac{1}{\eps}\left(\frac{1}{\log^{\eps}\sigma_{n-1}}-
  \frac{1}{\log^{\eps}\sigma_{n}}\right).
\eea

Hence
\beqa
 \sum_{n=2}^N\alpha_n^2 
 &=&\sum_{n=2}^N\frac{\sigma_n-\sigma_{n-1}}{\sigma_n\log^{1+\eps}\sigma_n}
 \le \frac{1}{\eps}\sum_{n=2}^N
 \left(\frac{1}{\log^{\eps}\sigma_{n-1}}-\frac{1}{\log^{\eps}\sigma_{n}}\right)
 =\frac{1}{\eps}\left( \frac{1}{\log^{\eps}\sigma_1}-
 \frac{1}{\log^{\eps}\sigma_N}\right)\\
 &\le& \frac{1}{\eps \log^{\eps}\sigma_1}  \qedhere
\eeqa
\end{proof}


\begin{remark} \label{R-general}
If one looks closely at the proof of Theorems \ref{C-upper-growth} and \ref{thm4.2} one can show that given any concave growth function $\vp_0$ one 
can create a Blaschke product $B$ so that the functions in $(B H^2)^{\perp}$ have growth rates controlled by the associated $\vp$ (upper control as in Theorem \ref{C-upper-growth} and lower control as in Theorem \ref{thm4.2}). 
\end{remark}




Without going into cumbersome technical details, here is another
remark on the optimality of Theorem \ref{thm4.2}. 
We are interested in the following question: for which sequences 
$\eps_n \downarrow 0$
does there exist a sequence $(\alpha_n)_{n \geq 1}\in \ell^2$ such that
\bea\label{optimality}
 \sum_{n=1}^N\alpha_n\sqrt{x_n}=\eps_N\sigma_N\ ?
\eea
For example, when $x_n \equiv 1$ (Theorem \ref{thm4.2} is still valid in
this setting) 
we have $\sigma_{N} = N$ and 
 the question becomes: for which sequences
$\eps_n \downarrow 0$ does there exist a sequence $(\alpha_n)_{n \geq 1 }\in \ell^2$ such that
\bea\label{optim1}
 \sum_{n=1}^N\alpha_n=\eps_N\sqrt{N}\ ?
\eea
It is possible to show that, in this case, we can take $\alpha_n$ to be 
\[
 \alpha_n= \eps_n\sqrt{n}-\eps_{n-1}\sqrt{n-1},
\]
which, since $(\alpha_n)_{n \geq 1}\in \ell^2$, yields 
\[
 \sum_n\frac{\eps_n^2}{n}=\sum_n\frac{\eps_n}{\sigma_n} <\infty.
\]
So, for instance,
if we were to choose $\eps_n=1/\log^{\alpha}n$, then we would need
$\alpha>1/2$ which is, in a sense, optimal in view of the preceding corollary.

A crucial point in this discussion is the fact that $(\eps_n)_{n \geq 1}$ is a \emph{decreasing} sequence.

\noindent {\bf Second class of examples:}
In the preceding class of examples from \eqref{defseq}, we slowed down the growth of functions in
$(B H^2)^{\perp}$ by controlling the ``tangentiality'' of the sequence (given by the
speed of convergence to zero of $x_n$). Our second class of examples are 
of the following type:
\begin{equation} \label{second}
 \lambda_n=r_ne^{i\theta_n}, \quad 0 < \theta_n < 1, \quad 1-r_n=\theta_n^2, \quad \sum_{n \geq 1} \theta_n < \infty,
\end{equation}
where $\theta_n$ can be adjusted to control the growth
speed of $(B H^2)^{\perp}$-functions. Asymptotically, this sequence is in the
oricycle $\{z\in\D:|z-1/2|=1/2\}$.
We also note that 
$$\sum_{n \geq 1} (1 - |\lambda_n|) = \sum_{n \geq 1} \theta_{n}^{2} < \infty$$ so indeed $(\lambda_{n})_{n \geq 1}$ is a Blaschke sequence. Moreover, 
\begin{equation} \label{F-cond-A}
\sum_{n \geq 1} \frac{1 - |\lambda_n|}{|1 - \lambda_n|} \asymp \sum_{n \geq 1}   \frac{\theta_{n}^{2}}{\theta_n} = \sum_{n \geq 1} \theta_{n} < \infty
\end{equation}
 and so, by \eqref{Frost},  $\lim_{r \to 1^{-}} B(r) = \eta \in \T$. Still further, we have 
$$\sum_{n \geq 1}  \frac{1 - |\lambda_n|}{|1 - \lambda_n|^2} \asymp \sum_{n \geq 1}\frac{\theta_{n}^{2}}{\theta_{n}^2} =  + \infty$$
so $(\lambda_{n})_{n \geq 1}$ does not satisfy the hypothesis \eqref{AC1} of the Ahern-Clark theorem. Thus we can expect bad behavior of functions from $(B H^2)^{\perp}$.

As in \eqref{Pythag-Lemma}, we have
\beqa
 \frac{1-|\lambda_k|^2}{|1-r\lambda_k|^2}
 \asymp\frac{1-r_k}{(1-r)^2+\theta_k^2}
 =\frac{\theta_k^2}{(1-r)^2+\theta_k^2}
&\asymp& \left\{\begin{array}{ll}
	 1 & \text{if }(1-r)\le \theta_k\\
	\frac{\dst\theta_k^2}{\dst (1-r)^2} & \text{if }(1-r)>\theta_k.
 \end{array}
 \right.
\eeqa
Using again Lemma \ref{KeyLemma}, the splitting gives:
\bea\label{splitting}
 \|k_r^B\|^2\asymp \sum_{k\ge 1}\frac{1-|\lambda_k|^2}{|1-r\lambda_k|^2}
 \asymp  \sum_{\{k:(1-r)\le\theta_k\}} 1 
 +\frac{1}{(1-r)^2}\sum_{\{k:(1-r)> \theta_k\}}{\theta_k^2}.
\eea

\begin{theorem} \label{Oricycle-seq}
Let $(\sigma_N)_{N \geq 1}$ be a sequence of positive numbers strictly increasing to infinity,
and
\bea\label{GrCondSigma}
  \sigma_{N+1}\le 2^{\beta}\sigma_N,\quad N\in \N,
\eea
for some $\beta\in (0,2)$.
Then there exists a sequence $(\theta_k)_{k \geq 1} \in \ell^1$ such that 
\[
 \|k_{\rho_N}^B\|\asymp \sqrt{\sigma_N},
\]
where $B$ is the Blaschke product whose zeros are $\Lambda=(\lambda_k)_{k \geq 1}$
and $\lambda_k=r_ke^{i\theta_k}$, $1-r_k=\theta_k^2$.
\end{theorem}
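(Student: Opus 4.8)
The plan is to reverse-engineer the sequence $(\theta_k)_{k\ge 1}$ from the prescribed growth sequence $(\sigma_N)_{N\ge1}$ using the splitting formula \eqref{splitting}. Recall from \eqref{splitting} that, with $\rho_N = 1-2^{-N}$ and $1-\rho_N = \theta_N^{(0)} := 2^{-N}$, we have
\[
 \|k_{\rho_N}^B\|^2 \asymp \#\{k : 2^{-N}\le \theta_k\} + 2^{2N}\!\!\sum_{\{k:\theta_k < 2^{-N}\}}\!\!\theta_k^2.
\]
So the most natural thing is to choose the $\theta_k$ to form a decreasing sequence with a controlled number of terms in each dyadic block $[2^{-(j+1)}, 2^{-j})$; then the first (counting) term will dominate provided the $\theta_k^2$ tail is summable fast enough, and this counting term is exactly the number of indices $k$ with $\theta_k \ge 2^{-N}$.

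Concretely, I would set $m_j := \lfloor \sigma_j\rfloor$ (or $\lceil\sigma_j\rceil$), which by hypothesis is nondecreasing and strictly tends to infinity, and define the sequence $(\theta_k)$ so that exactly $m_{j} - m_{j-1}$ of the $\theta_k$'s equal $2^{-j}$ (with $m_0 := 0$), arranged in decreasing order. Then $\#\{k:\theta_k\ge 2^{-N}\} = m_N \asymp \sigma_N$, giving the lower bound for $\|k_{\rho_N}^B\|^2$. For the upper bound I must check that the tail term $2^{2N}\sum_{\theta_k<2^{-N}}\theta_k^2$ is $O(\sigma_N)$, hence negligible. Since there are $m_{j}-m_{j-1}$ terms of size $2^{-j}$ for each $j>N$, this tail equals $2^{2N}\sum_{j>N}(m_j - m_{j-1})2^{-2j}$; summation by parts converts it to something comparable to $2^{2N}\sum_{j>N} m_j 2^{-2j}$, and the growth restriction \eqref{GrCondSigma}, namely $\sigma_{N+1}\le 2^\beta\sigma_N$ with $\beta<2$, forces $m_j \lesssim 2^{\beta(j-N)}\sigma_N$, so the tail is $\lesssim \sigma_N \sum_{j>N} 2^{(\beta-2)(j-N)} \lesssim \sigma_N$. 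This is precisely where the exponent $\beta\in(0,2)$ enters: it guarantees convergence of the geometric series $\sum 2^{(\beta-2)(j-N)}$ and bounds the tail by a constant multiple of $\sigma_N$.

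I also need to confirm the standing hypotheses of Lemma \ref{KeyLemma} and of the setup \eqref{second}: that $(\theta_k)\in\ell^1$, that $(\lambda_k)$ is a Blaschke sequence, and that $\measuredangle\lim_{z\to1}B(z)=\eta\in\T$. Summability of $(\theta_k)$ follows from $\sum_k \theta_k = \sum_j (m_j-m_{j-1})2^{-j}$, which by summation by parts is $\asymp \sum_j m_j 2^{-j} \lesssim \sum_j 2^{\beta j}\sigma_1 \cdot$ (bounded by a geometric-type sum since one compares $m_j$ to $2^{\beta j}$) — actually more simply $\sum_j m_j 2^{-j} \le \sum_j 2^{\beta j} 2^{-j}\sigma_1' < \infty$ because $\beta<1$... here one must be slightly careful: if $\beta\ge1$ the naive bound fails, so I would instead argue directly that $m_j - m_{j-1} \le m_j \le 2^{\beta j}\sigma_0'$ is not needed term-by-term; rather $\sum_j(m_j-m_{j-1})2^{-j} \le \sum_j (m_j - m_{j-1})\cdot 1$ diverges, so I genuinely need the sharper summation-by-parts identity $\sum_{j\le J}(m_j-m_{j-1})2^{-j} = m_J 2^{-J} + \sum_{j<J} m_j(2^{-j}-2^{-(j-1)})\cdot(-1) \asymp \sum_j m_j 2^{-j}$, and then $m_j\lesssim 2^{\beta j}$ with $\beta<2$ is \emph{not} enough for $\ell^1$ unless $\beta<1$. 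The resolution: \eqref{GrCondSigma} with $\beta<2$ only controls \emph{one-step} growth, but iterating gives $m_j\le 2^{\beta(j-1)}m_1$, so $\sum_j m_j 2^{-j}\asymp m_1\sum_j 2^{(\beta-1)j}$, which converges iff $\beta<1$. Hence the genuinely delicate point — and the main obstacle — is reconciling $\ell^1$-summability of $(\theta_k)$ with the hypothesis $\beta<2$; I would handle this by choosing the multiplicities more economically (e.g. spreading the required $m_N$ terms across blocks $2^{-j}$ for $j$ up to roughly $N$, not concentrating them, or equivalently defining $\theta_k$ via the inverse function of $N\mapsto\sigma_N$ evaluated with a $2^{-N}$ weighting) so that both the counting asymptotics and $\ell^1$ hold simultaneously; the computation that the tail term stays $O(\sigma_N)$ then still uses $\beta<2$ exactly as above.

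Finally, once $\|k^B_{\rho_N}\|\asymp\sqrt{\sigma_N}$ is established on the sequence $\rho_N$, I note the theorem as stated only claims the estimate at the points $\rho_N$, so no further interpolation in the Stolz angle is required (though, as in the proof of Theorem \ref{C-upper-growth}, one could upgrade it using \eqref{tech-est} and \eqref{QQQ}). The verification that $\measuredangle\lim_{z\to1}B(z)\in\T$ follows from \eqref{F-cond-A}, i.e. $\sum_k\theta_k<\infty$, which we have just arranged, so Lemma \ref{KeyLemma} applies and closes the argument.
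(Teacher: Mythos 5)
Up to the $\ell^1$ issue, your construction \emph{is} the paper's proof in discrete form: putting $m_j-m_{j-1}\approx\sigma_j-\sigma_{j-1}$ points with $\theta_k=2^{-j}$ is exactly what the paper does by setting $\theta_k=2^{-\psi^{-1}(k)}$ for a continuous increasing $\psi$ with $\psi(N)=\sigma_N$, and your two estimates (counting term $m_N\asymp\sigma_N$; tail $2^{2N}\sum_{j>N}(m_j-m_{j-1})2^{-2j}\lesssim\sigma_N\sum_{j>N}2^{(\beta-2)(j-N)}\lesssim\sigma_N$) are precisely the computation around \eqref{splitting}, \eqref{estimBlaschke2a} and \eqref{estimBlaschke2b}. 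Also your remark that the statement only concerns the points $\rho_N$, so no Stolz--angle upgrade is needed, matches the paper.

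The gap is the $\ell^1$ step, and the repair you sketch (``spreading the multiplicities more economically'') cannot work. If $\#\{k:\theta_k\ge 2^{-N}\}\asymp\sigma_N$ for all $N$, then $\sum_k\theta_k\asymp\sum_N 2^{-N}\#\{k:\theta_k\ge 2^{-N}\}\asymp\sum_N 2^{-N}\sigma_N$, no matter how the points are arranged inside the dyadic blocks; and even if you try to let the tail term carry part of $\sigma_N$, one checks that $\sum_N 2^{-N}\bigl(\#\{k:\theta_k\ge 2^{-N}\}+2^{2N}\sum_{\theta_k<2^{-N}}\theta_k^2\bigr)\asymp\sum_k\theta_k$, so for zeros with $1-r_k=\theta_k^2$ the conclusion $\|k^B_{\rho_N}\|^2\asymp\sigma_N$ forces $\sum_k\theta_k\asymp\sum_N\sigma_N2^{-N}$. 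Thus $\ell^1$ is equivalent to $\sum_N\sigma_N2^{-N}<\infty$, which \eqref{GrCondSigma} with $\beta\in[1,2)$ does not guarantee (for $\sigma_N\asymp 2^{\beta N}$ with $\beta>1$ the conclusion is even impossible outright, since $\|k^B_{\rho_N}\|^2\le(1-\rho_N^2)^{-1}\asymp 2^N$). So the delicate point you flagged is genuine, but it is a defect of the statement rather than something cleverer bookkeeping can fix: the paper's own proof only extracts the Blaschke condition $\sum_k\theta_k^2<\infty$ from $\beta<2$ (see the remark after the theorem), while the Frostman condition \eqref{F-cond-A}, needed to invoke Lemma \ref{KeyLemma}, requires the stronger $\sum_N\sigma_N2^{-N}<\infty$ (for instance $\beta<1$), as indeed holds in all the examples the paper treats. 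Your write-up would be complete if you added that hypothesis (or restricted to $\beta<1$) instead of promising a redistribution argument that cannot exist.
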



\begin{proof}
Let $(\sigma_N)_{N \geq 1}$ be as in the theorem and let
\[
 \psi:[0,+\infty)\lra [0,+\infty)
\]
be a continuous increasing function such that 
\bea\label{pointvalues}
 \psi(N)=\sigma_N,\quad N\in\N.
\eea
We could, for example, choose $\psi$ to be the continuous piecewise affine
function defined at the nodes by \eqref{pointvalues}.
Since $\psi$ is continuous and strictly increasing to infinity on $[0,+\infty)$,
it has an inverse function $\psi^{-1}$.
Set
\[
 \theta_k=2^{-\psi^{-1}(k)},\quad k\in \N.
\]
We need to show that $(\theta_n)_{n \geq 1} \in \ell^1$ (in order to satisfy the Frostman condition in \eqref{F-cond-A}) but this will come out of our analysis below. 
Let us consider the first sum in \eqref{splitting} (with $r = \rho_N$):
\[
 \sum_{\{k:(1-\rho_N)\le\theta_k\}} 1 
 =\sum_{\{k:1/2^N\le 1/2^{\psi^{-1}(k)}\}} 1 
 =\sum_{\{k:{\psi^{-1}(k)}\le N\}} 1 
 =\sum_{\{k:k\le {\psi(N)}\}} 1=\psi(N)=\sigma_N .
\]

We have to consider the second sum in \eqref{splitting}:
\beqa
 \frac{1}{(1-\rho_N)^2}\sum_{\{k:(1-\rho_N)> \theta_k\}}{\theta_k^2}
 =2^{2N} \sum_{\{k: {\psi^{-1}(k)}\ge N+1\}} 2^{-2\psi^{-1}(k)}
  =2^{2N} \sum_{\{k \ge {\psi(N+1)}\}} 2^{-2\psi^{-1}(k)}.
\eeqa

%
%
%
Since $\sigma_n=\psi(n)$, equivalently $\psi^{-1}(\sigma_n)=n$,
we have 
\bea\label{estimBlaschke2a}
 \sum_{\{k \ge {\psi(N+1)}\}} 2^{-2\psi^{-1}(k)}
 &=&\sum_{n\ge N+1}\sum_{k=\sigma_n}^{\sigma_{n+1}-1}2^{-2\psi^{-1}(k)}
 \le \sum_{n\ge N+1} (\sigma_{n+1}-\sigma_n)
 2^{-2\psi^{-1}(\sigma_n)}\nn\\
 &\le& \sum_{n\ge N+1}\frac{1}{2^{2n}}\sigma_{n+1}\nn\\
 &\le& 2^{\beta}\sum_{n\ge N+1}\frac{1}{2^{2n}}\sigma_{n}.
\eea
Now, setting $u_n=\sigma_n/2^{2n}$,
we get $v_n=u_{n+1}/u_n\le 2^{\beta-2}<1$, from which
standard arguments give
\bea\label{estimBlaschke2b}
 \sum_{n\ge N+1}\frac{\sigma_{n}}{2^{2n}}
 \lesssim \frac{\sigma_N}{2^{2N}}.
\eea
Hence
\[
 2^{2N}\sum_{\{k \ge {\psi(N+1)}\}} 2^{-2\psi^{-1}(k)}\lesssim \sigma_N
\]
So, 
according to \eqref{splitting},
\beqa
 \sigma_N\le
 \underbrace{\sum_{\{k:(1-r)\le\theta_k\}} 1 
 +\frac{1}{(1-r)^2}\sum_{\{k:(1-r)> \theta_k\}}{\theta_k^2}}_{
 \asymp \|k^B_{\rho_N}\|^2}
 \lesssim \sigma_N+ \sigma_N
\eeqa
which completes the proof.
\end{proof}

\begin{remark}
Note that the Blaschke condition for $\Lambda$ is given by
\[
 \sum_k(1-|\lambda_k|^2)\simeq \sum_k (1-r_k)
 =\sum_k \theta_k^2=\sum_k 2^{-2\psi^{-1}(k)}<\infty.
\]
Combining for instance \eqref{estimBlaschke2a} and
\eqref{estimBlaschke2b} it can be seen that the condition
$0<\beta<2$ (condition \eqref{GrCondSigma}) guarantees that $\Lambda$ is
a Blaschke sequence.
\end{remark}

\begin{example}\label{Example4.32}
Here is a list of examples. 
\begin{enumerate}
\item Let $\sigma_N=2^{N/\alpha}$, $N = 1, 2, \ldots$, where $\alpha>1$
(this is needed for \eqref{GrCondSigma}).
Then, we can choose $\psi(t)=2^{t/\alpha}$. Hence
\[
 \theta_k=2^{-\psi^{-1}(k)}=2^{-\alpha \log k}=\frac{1}{k^{\alpha}}
\]
(logarithms are base 2). Hence, with this choice of arguments,
we get
\[
 \|k_{\rho_N}^B\|\asymp 2^{N/2\alpha}=\frac{1}{(1-\rho_N)^{1/2\alpha}},
\] 
which by similar arguments as given earlier (see the proof of Theorem \ref{C-upper-growth}) can be extended to every
$r\in (0,1)$, i.e., 
$$|f(r)| \lesssim \frac{1}{(1 - r)^{1/2\alpha}}, \quad f \in (B H^2)^{\perp}.$$
We thus obtain all power growths beyond the limiting case $1/2$.

\item Let $\sigma_N={N}^{\alpha}$, $N = 1, 2, \ldots$, where $\alpha>0$.
Then we can choose $\psi(t)={t}^{\alpha}$. Hence
\[
 \theta_k=2^{-\psi^{-1}(k)}=2^{-k^{1/\alpha}},
\]
and, with this choice of arguments,
we get
\[
 \|k_{\rho_N}^B\|\asymp N^{\alpha/2}=\left(\log\frac{1}{1-\rho_N}\right)^{\alpha/2}.
\] 
Thus as in (1) we get
$$|f(r)| \lesssim \left(\log\frac{1}{1-r}\right)^{\alpha/2},  \quad f \in (B H^2)^{\perp}.$$
In the special case $\alpha=1$ we obtain logarithmic growth.

\item  Let $\sigma_N=\log^2{N}$, $N\ge 2$. 
Then we can choose $\psi(t)=\log^2{t}$. Hence
\[
 \theta_k=2^{-\psi^{-1}(k)}=2^{-2^{\sqrt{k}}}.
\]
With this choice of arguments,
we get, for large enough $N$,
\[
 \|k_{\rho_N}^B\|\asymp {\log N}={\log \log\frac{1}{1-\rho_N}},
\] 
and so
$$|f(r)| \lesssim \log \log \frac{1}{1 - r}, \quad f \in (B H^2)^{\perp}.$$
\end{enumerate}
\end{example}

\section{A general growth result for $(B H^2)^{\perp}$}

It turns out 
that growth results can be phrased in terms of a more general result. 
In fact our first class of examples can be deduced from such a
general result (see Remark \ref{RemExam}).

We will start by introducing a growth parameter associated
with a Blaschke sequence $\Lambda=(\lambda_n)_{n \geq 1} \subset \D$
and a boundary point $\zeta\in \T$.
Let us again set
\[
 \rho_N:=1-\frac{1}{2^N}, \quad N\in\N.
\]
For every $N\in\N$ and $n\in\Z$, set 
\begin{equation} \label{GNn}
 \Gamma^{N,\zeta}_n:=\left\{z\in\D:\frac{1-|z|^2}{|\zeta-\rho_Nz|^2}
 \in \left[{\frac{1}{2^{n+1}}},\frac{1}{{2^{n}}}\right)\right\}.
\end{equation}
This is a kind of pseudo-hyperbolic annulus (see Figure 2). A routine
computation shows that
$$\frac{\dst 1-|z|^2}{\dst |\zeta-\rho z|^2}=c
\Longleftrightarrow
 \left|z-\frac{c\rho}{1+c\rho^2}\zeta\right|^2=\frac{1-c(1-\rho^2)}{(1+c\rho^2)^2}.
$$
\begin{figure}
\includegraphics[width = 2in]{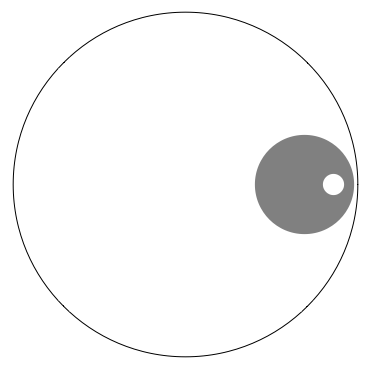}
\caption{An example of a domain $\Gamma_{n}^{N, 1}$.}
\end{figure} 
From here observe that necessarily 
$c\le \frac{\dst 1}{\dst 1-\rho^2}$ which means that $\Gamma^{N,\zeta}_n$
is empty when $$\frac{1}{2^{n+1}}\ge \frac{\dst 1}{\dst 1-\rho_N^2}
\ge \frac{\dst 1}{\dst 2(1-\rho_N)}=  2^{N-1}.$$
Thus we assume that $n\ge -N$. 

For simplicity, we will assume from now on that
$\zeta=1$ and set
\[
 \Gamma^N_n:=\Gamma^{N,1}_n.
\]
Define
\[
 \alpha_{N, n} :=\#(\Lambda\cap \Gamma^N_n)
\]
(the number of points in $\Lambda\cap \Gamma^N_n$)
along with the growth parameter
\[
 \sigma_N^{\Lambda}:=\sum_{n\in\Z}\frac{\alpha_{N, n}}{2^n}
 =\sum_{n\ge -N}\frac{\alpha_{N, n}}{2^n}.
\]

 For each $\lambda \in \Lambda \cap \Gamma^{N}_{n}$ we have, by definition (see \eqref{GNn}), 
 $$\frac{1}{2^{n}} \asymp \frac{1 - |\lambda|^2}{|1 - \rho_N \lambda|^2}$$ and so, since there are $\alpha_{N, n}$ points in $\Lambda \cap \Gamma^{N}_{n}$, we have 
 $$\sum_{n\ge -N}\frac{1}{2^n}\#(\Lambda\cap\Gamma^N_n) \asymp \sum_{n\ge -N}\sum_{\lambda\in\Lambda\cap\Gamma^N_n}
 \frac{1-|\lambda|^2}{|1-\rho_N\lambda|^2}.$$
  But since $(\Gamma^N_n)_{n\ge -N}$ is
a partition of $\D$ (see Figure 3)
\begin{figure}
\includegraphics[width = 2in]{PsHy-region2.png}
\includegraphics[width = 2in]{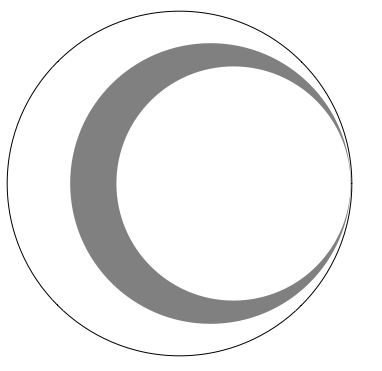}
\caption{The domains $\Gamma^{N}_{n}, -N \leq n $ cover $\D$. }
\end{figure} 
we get 
$$\sum_{n\ge -N}\sum_{\lambda\in\Lambda\cap\Gamma^N_n}
 \frac{1-|\lambda|^2}{|1-\rho_N\lambda|^2}
=\sum_{n \geq 1}
 \frac{1-|\lambda_n|^2}{|1-\rho_N\lambda_n|^2}.
$$
Putting this all together we arrive at
\begin{equation} \label{sigmaN}
\sigma_{N}^{\Lambda} \asymp \sum_{n \geq 1}
 \frac{1-|\lambda_n|^2}{|1-\rho_N\lambda_n|^2}.
\end{equation}

Combine \eqref{sigmaN} with Lemma \ref{KeyLemma}  to get the two-sided estimate 
\begin{equation} \label{sigmaN-kr}
\sigma_{N}^{\Lambda} \asymp \|k^B_{\rho_{N}}\|^2.
\end{equation}

Note that if the zeros $(\lambda_{n})_{n \geq 1}$ of $B$ satisfy
the Ahern-Clark condition \eqref{AC1}  then, by Theorem \ref{AC-paper},
the sequence $(\|k^B_{\rho_N}\|)_{N \geq 1}$ is uniformly bounded and, by \eqref{sigmaN-kr}, so is  $(\sigma_N^{\Lambda})_{N \geq 1}$.

To discuss the case when 
$(\sigma_N^{\Lambda})_{N \geq 1}$ is unbounded, 
we will impose the mild regularity condition
\bea\label{regcond}
 0<m:=\inf_N\frac{\sigma^{\Lambda}_{N+1}}{\sigma^{\Lambda}_N}\le M
 := \sup_N\frac{\sigma^{\Lambda}_{N+1}}
 {\sigma^{\Lambda}_N}<\infty.
\eea
In Section \ref{S3nn}, this condition was automatically satisfied by $\sigma_N
=\sum_{k=1}^Nx_k$.

Let us associate with $\sigma^{\Lambda}_N$ the functions $\vp_0$ and $\vp$ as
in Theorem \ref{C-upper-growth}.
Then,
%
from \eqref{sigmaN-kr} we deduce the following result in the
same way as Theorem \ref{C-upper-growth}.

\begin{theorem}\label{thm3.1}
Let $\Lambda=(\lambda_n)_{n \geq 1}\subset \D$ be a Blaschke sequence
with associated growth sequence $\sigma^{\Lambda}=
(\sigma^{\Lambda}_{N})_{N \geq 1}$ at $\zeta=1$ satisfying  \eqref{regcond}
and $B$ the Blaschke product with zeros $\Lambda$. Then 
\[
 \|k^B_z\|\asymp \sqrt{\vp(|z|)}, \quad z \in \Gamma_{\alpha, 1}.
\]
Consequently, every  $f\in (B H^2)^{\perp}$ satisfies
\[
 |f(z)|=|\langle f,k_z\rangle| \lesssim \sqrt{\vp(|z|)}, \quad z \in \Gamma_{\alpha, 1}.
\] 
\end{theorem}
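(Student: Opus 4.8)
The plan is to reduce Theorem \ref{thm3.1} to Theorem \ref{C-upper-growth} by showing that the abstract growth sequence $\sigma^\Lambda=(\sigma^\Lambda_N)_{N\ge1}$ plays exactly the role that $\sigma_N=\sum_{k=1}^N x_k$ played there, and that the only properties of $\sigma_N$ actually used in that proof were the two-sided kernel estimate at the dyadic points $\rho_N$ together with the mild regularity $\sigma_N\asymp\sigma_{N\pm1}$. First I would record the key identity \eqref{sigmaN-kr}, namely $\sigma^\Lambda_N\asymp\|k^B_{\rho_N}\|^2$, which was already established in the excerpt by combining the partition estimate \eqref{sigmaN} with Lemma \ref{KeyLemma}; note this uses that $\measuredangle\lim_{z\to1}B(z)=\eta\in\T$, which holds here since $(\sigma^\Lambda_N)$ being unbounded does not obstruct Lemma \ref{KeyLemma} --- wait, one should be slightly careful: Lemma \ref{KeyLemma} assumes the angular limit exists, so I would either add this as a hypothesis or observe that when $\sigma^\Lambda_N\to\infty$ the relevant comparisons still go through on the ray, as the proof of Lemma \ref{KeyLemma} only needed that the zeros eventually leave $\Gamma_{\alpha,1}$; in any case this is the same standing assumption as in Section \ref{S3nn}.

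Next I would invoke the regularity hypothesis \eqref{regcond}: $m\le\sigma^\Lambda_{N+1}/\sigma^\Lambda_N\le M$ gives $\sigma^\Lambda_N\asymp\sigma^\Lambda_{N\pm1}$, so the piecewise affine function $\vp_0$ with $\vp_0(N)=\sigma^\Lambda_N$ satisfies $\vp_0(x)\asymp\sigma^\Lambda_N$ for $N-1\le x\le N+1$, exactly as in the display following \eqref{QQQ} in the proof of Theorem \ref{C-upper-growth}. Setting $\vp(y):=\vp_0(\log_2\frac{1}{1-y})$, the identity \eqref{sigmaN-kr} immediately yields $\|k^B_{\rho_N}\|^2\asymp\vp_0(N)=\vp(\rho_N)$.

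The remaining point is to pass from the dyadic points $\rho_N=1-2^{-N}$ to an arbitrary $z\in\Gamma_{\alpha,1}$. Here I would repeat verbatim the argument around \eqref{QQQ}: given $z\in\Gamma_{\alpha,1}$ with $|z|>1/2$, choose $N$ with $|b_z(\rho_N)|\le\delta<1$ where $\delta$ depends only on the Stolz aperture $\alpha$; then the comparison \eqref{tech-est} gives $|1-\overline\lambda_n z|\asymp|1-\overline\lambda_n\rho_N|$ uniformly in $n$, hence by Lemma \ref{KeyLemma} $\|k^B_z\|^2\asymp\|k^B_{\rho_N}\|^2\asymp\sigma^\Lambda_N$; finally $\rho_{N-1}\le|z|\le\rho_{N+1}$ together with $\vp_0(x)\asymp\sigma^\Lambda_N$ on $[N-1,N+1]$ gives $\|k^B_z\|^2\asymp\vp(|z|)$. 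The consequence for $f\in(BH^2)^{\perp}$ is then immediate from $|f(z)|=|\langle f,k^B_z\rangle|\le\|f\|\,\|k^B_z\|$, i.e.\ from \eqref{trivial-est}.

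I do not expect any genuine obstacle: the entire content is bookkeeping, checking that nothing in the proof of Theorem \ref{C-upper-growth} used the specific form $\sigma_N=\sum x_k$ beyond \eqref{sigmaN-kr} and the regularity $\sigma_N\asymp\sigma_{N\pm1}$. The one place requiring a moment's care is confirming that \eqref{sigmaN-kr}, and with it Lemma \ref{KeyLemma}, is legitimately available in the possibly-unbounded regime --- i.e.\ that the hypothesis $\measuredangle\lim_{z\to1}B(z)=\eta\in\T$ (equivalently, zeros eventually leaving every Stolz angle at $1$) is in force; this is the implicit standing assumption throughout the section, so I would simply note it, perhaps remarking that \eqref{regcond} with $\sigma^\Lambda_N$ unbounded is compatible with it. Everything else is a direct transcription of the proof of Theorem \ref{C-upper-growth}.
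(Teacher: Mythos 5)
Your proposal is correct and follows essentially the paper's own route: the paper obtains Theorem \ref{thm3.1} precisely by combining \eqref{sigmaN-kr} (i.e.\ \eqref{sigmaN} together with Lemma \ref{KeyLemma}) with the regularity condition \eqref{regcond}, defining $\vp_0$ and $\vp$ from $\sigma^{\Lambda}_N$, and then repeating the Stolz-angle transfer argument of \eqref{tech-est}--\eqref{QQQ} from the proof of Theorem \ref{C-upper-growth}, concluding via \eqref{trivial-est}. Your caveat that Lemma \ref{KeyLemma} requires $\measuredangle\lim_{z\to 1}B(z)\in\T$ is well taken --- the paper leaves this as an implicit standing assumption, exactly as you propose to handle it.
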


\begin{remark}\label{RemExam}
It turns out that for the sequences discussed in Theorem \ref{C-upper-growth}  
we have 
$$\sigma_{N}^{\Lambda} \asymp \sigma_{N} = \sum_{k = 1}^{N} x_{k}.$$
The details are somewhat cumbersome so we will not give them here. 
\end{remark}

\section{A final remark on unconditional bases}\label{section5}

Since a central piece of our discussion was the behavior of the reproducing 
kernels $k^{B}_{\rho_N}$, one could ask whether or not $(k^B_{\rho_N})_{N \geq 1}$ forms
an unconditional bases (or sequence)  for $(B H^2)^{\perp}$.

To this end, let $x_n=k^B_{\rho_n}/\|k^B_{\rho_n}\|$ 
and $G=(\langle x_n,x_k\rangle)_{n,k}$ be the associated Gram matrix.
Suppose that $(x_n)_{n \geq 1}$ were an unconditional basis 
(or sequence) for $(B H^2)^{\perp}$. 
In this case, it is well known (see e.g.\ \cite[Exercise C3.3.1(d)]{Nik2}) 
 that $G$ represents an isomorphism
from $\ell^2$ onto $\ell^2$. 
It follows from the unconditionality of $(x_n)_{n \geq 1}$ that
every $f\in (B H^2)^{\perp}$ (or every $f$ in the span of $(x_n)_{n \geq 1}$) can be written as
\[
 f=f_{\alpha}:=\sum_{n\ge 1} \alpha_n x_n, \quad \alpha = (\alpha_{n})_{n \geq 1} \in \ell^2,
\]
with $\|f_{\alpha}\|^2\asymp\sum_{n\ge 1}|\alpha_n|^2<\infty$.
As before we want to estimate $f=f_{\alpha}$ at $\rho_N$. Indeed, 
\[
 f_{\alpha}(\rho_N)=\sum_{n\ge 1}\alpha_n\frac{k^B_{\rho_n}(\rho_N)}{\|k^B_{\rho_n}\|}
 =\|k^B_{\rho_N}\|\sum_{n\ge 1}\alpha_n\frac{\langle k^B_{\rho_n},k^B_{\rho_N}
   \rangle}{\|k^B_{\rho_n}\| \|k^B_{\rho_N}\|}
 =\|k^B_{\rho_N}\| (G \alpha)_{N}.
\]
Again we observe that for every $\alpha \in \ell^2$, we have
\[
  f_{\alpha}(\rho_N) = \|k^B_{\rho_N}\| (G \alpha)_{N}
\]
where $G \alpha \in \ell^2 $,
and  for every $\ell^2$-sequence $\beta$ we
could find an $f\in (B H^2)^{\perp}$ such that $$\frac{f(\rho_N)}{\|k^B_{\rho_N}\|}=\beta_N.$$

However, recall from Remark \ref{R-general} 
that for $\eps>0$ there is a function $f_{\alpha}$ with
\[
 |f_{\alpha}(\rho_N)|\gtrsim \sqrt{\frac{\sigma_N}{\log^{1+\eps}\sigma_N}}
\]
(we refer to that remark for notation).
Since by Theorem \ref{C-upper-growth} we have $\|k_{\rho_N}^B\|\asymp \sigma_N$,
we would thus have
\[
 \beta_N:=\frac{|f_{\alpha}(\rho_N)|}{\|k^B_{\rho_N}\|}
 \asymp \frac{|f_{\alpha}(\rho_N)|}{\sqrt{\sigma_N}}
 \gtrsim\frac{1}{\log^{(1+\eps)/2}\sigma_N}.
\]
However, for instance, choosing 
$x_n=1/n$ yields $\sigma_N\simeq \log N$, in which case
$(1/\log^{(1+\eps)/2}\sigma_N)_{N \geq 1}$ is obviously not in $\ell^2$.
(In fact, a closer look at the proof of Theorem \ref{thm4.2} shows that
one can also choose $x_n=1$ to get a sequence $(\beta_N)_{N \geq 1} \not \in \ell^2$.) 
As a result, we can conclude that in the above examples $(k^B_{\rho_N})_{N \geq 1}$
cannot be an unconditional basis for $(B H^2)^{\perp}$ 
(nor an unconditional sequence since the functions in 
Theorem \ref{thm4.2}
were constructed using the reproducing kernels, so
they belong the space spanned by $(x_n)_{n \geq 1}$).

It should be noted that the problem of deciding whether or not a sequence of
reproducing kernels forms an unconditional basis (or sequence) for a model space is a
difficult problem related to the  Carleson condition and the invertibility of
Toeplitz operators. We do not want to go into details here, but the situation 
becomes even more difficult in our context where $\varlimsup_N|B(\rho_N)|=1$. See
\cite[Chapter D4]{Nik2} for more about this.

\bibliography{AC}

\end{document}